\providecommand{\U}[1]{\protect\rule{.1in}{.1in}}
\newtheorem{theorem}{Theorem}
\newtheorem{corollary}{Corollary}
\newtheorem{definition}{Definition}
\newtheorem{lemma}{Lemma}
\newtheorem{proposition}{Proposition}
\newtheorem{remark}{Remark}
\newenvironment{proof}[1][Proof]{\noindent\textbf{#1.} }{\ \rule{0.5em}{0.5em}}
\begin{document}

\title{Radially symmetric thin plate splines interpolating a circular contour map}
\author{Aurelian Bejancu\\Kuwait University, PO Box 5969, Safat 13060, Kuwait\thanks{E-mail:
\texttt{aurelian@sci.kuniv.edu.kw}}}
\date{February 6, 2015}
\maketitle

\begin{abstract}
Profiles of radially symmetric thin plate spline surfaces minimizing the Beppo
Levi energy over a compact annulus $R_{1}\leq r\leq R_{2}$ have been studied
by Rabut via reproducing kernel methods. Motivated by our recent construction
of Beppo Levi polyspline surfaces, we focus here on minimizing the radial
energy over the full semi-axis $0<r<\infty$. Using a $L$-spline approach, we
find two types of minimizing profiles: one is the limit of Rabut's solution as
$R_{1}\rightarrow0$ and $R_{2}\rightarrow\infty$ (identified as a
`non-singular' $L$-spline), the other has a second-derivative singularity and
matches an extra data value at $0$. For both profiles and $p\in\left[
2,\infty\right]  $, we establish the $L^{p}$-approximation order $3/2+1/p$ in
the radial energy space. We also include numerical examples and obtain a novel
representation of the minimizers in terms of dilates of a basis
function.\bigskip

\noindent\textbf{Keywords:} thin plate spline, radially symmetric function,
L-spline, Beppo Levi polyspline, approximation order\smallskip

\noindent\textbf{AMS Classification:} 41A05, 41A15, 41A63, 65D07

\end{abstract}

\section{Introduction}

A well-known tool for scattered data interpolation, the \emph{thin plate
spline} surface is defined as the unique minimizer of the squared seminorm%
\begin{equation}
\left\Vert F\right\Vert _{BL}^{2}:=\iint\nolimits_{\mathbb{R}^{2}}\left(
\left\vert F_{xx}\right\vert ^{2}+2\left\vert F_{xy}\right\vert ^{2}%
+\left\vert F_{yy}\right\vert ^{2}\right)  \mathrm{d}x\,\mathrm{d}y,
\label{eq:Duchon}%
\end{equation}
among all admissible $F$ taking prescribed values at given points (not all
collinear) in the plane. Here, $BL$ denotes the \emph{Beppo Levi}
space of admissible continuous functions $F$ with generalized
second-order derivatives in $L^{2}\left(  \mathbb{R}^{2}\right)  $, 
equipped with the above defined seminorm. Originally
discovered and utilized in aerospace engineering (Sabin \cite{MAS69}, Harder
and Desmarais \cite{HD72}), thin plate splines became a research topic in
approximation theory with the work of Duchon \cite{Du76}.

A different bivariate interpolation problem is to construct a surface that matches
boundary and internal curves within a bounded domain---in CAGD jargon, this is 
referred to as \emph{transfinite} interpolation. One solution to this problem, 
via Kounchev's  \emph{polyspline} method \cite{Ku01}, defines a unique 
piecewise biharmonic and globally $C^{2}$ surface passing through the given 
curve data, subject to additional boundary conditions, such as given normal 
derivative values.

Using the polyspline method, Bejancu \cite{MMA12} has recently proposed a 
new type of thin plate spline surfaces, matching continuous curves
prescribed along concentric circles. These surfaces, called 
\emph{Beppo Levi polysplines on annuli},
are minimizers of the polar coordinate version of (\ref{eq:Duchon}), namely%
\begin{equation}
\left\Vert f\right\Vert _{BL}^{2}:=\int_{0}^{\infty}\int_{-\pi}^{\pi}\left\{
\left\vert f_{rr}\right\vert ^{2}+2\left\vert \frac{f_{\theta}}{r^{2}}%
-\frac{f_{\theta r}}{r}\right\vert ^{2}+\left\vert \frac{f_{\theta\theta}%
}{r^{2}}+\frac{f_{r}}{r}\right\vert ^{2}\right\}  r\,\mathrm{d}\theta
\,\mathrm{d}r, \label{eq:Du-polar}%
\end{equation}
where $f\left(  r,\theta\right)  :=F\left(  r\cos\theta,r\sin\theta\right)  $
is the polar form of $F\in C^{2}\left(  \mathbb{R}^{2}\backslash\left\{
0\right\}  \right)  \cap BL$. Related Beppo Levi polyspline surfaces
interpolating continuous periodic data along parallel lines or hyperplanes
have also been studied in \cite{JAT08,CA11,IMA13}.

The construction of Beppo Levi polysplines on annuli is based on their Fourier
series representation in $\theta$, with amplitude coefficients depending on
$r$. In this paper, we focus on the special case of radially symmetric Beppo
Levi polysplines, which arise as zero-frequency amplitudes (non-zero
frequencies are treated in \cite{L-spl}). Note that, for radial functions
$f\left(  r,\theta\right)  \equiv f\left(  r\right)  $, the energy functional
(\ref{eq:Du-polar}) takes the following form (up to a constant factor):%
\begin{equation}
\int_{0}^{\infty}\left(  r\,\left\vert \dfrac{\mathrm{d}^{2}f}{\mathrm{d}%
r^{2}}\right\vert ^{2}+\frac{1}{r}\left\vert \dfrac{\mathrm{d}f}{\mathrm{d}%
r}\right\vert ^{2}\right)  \mathrm{d}r. \label{eq:semin0}%
\end{equation}
Therefore, minimizing (\ref{eq:semin0}) subject to $f$ taking prescribed
values at the set of positive radii $r_{1}<\ldots<r_{n}$ is equivalent to
determining the profile $f$ of a \emph{radially symmetric thin plate spline}
surface interpolating a circular contour map at the concentric circles
$r=r_{j}$, $1\leq j\leq n$.

For $0<R_{1}<R_{2}<\infty$, Rabut \cite{Rab96} has previously studied the
related problem of minimizing $%
{\textstyle\int_{R_{1}}^{R_{2}}}
(\,r\left\vert f^{\prime\prime}\right\vert ^{2}+\frac{1}{r}\left\vert
f^{\prime}\right\vert ^{2}\,)\,\mathrm{d}r$ in place of (\ref{eq:semin0}),
subject to the interpolation conditions at $r_{1}$, \ldots, $r_{n}$. Rabut
obtained a closed form solution to this problem via the method of abstract
splines and reproducing kernels.

Our approach (section 2) exploits the fact that an interpolating minimizer of
(\ref{eq:semin0}) is actually a univariate $L$-spline over $\left(
0,\infty\right)  $, of continuity class $C^{2}$ at the knots 
$r_{1}$, \ldots, $r_{n}$. The key observation is
that, after imposing the interpolation constraints and the condition that
(\ref{eq:semin0}) be finite, there remains an extra degree of freedom on the
interval $\left(  0,r_{1}\right)  $. This enables us to obtain two types of
minimizers: one is the limit of Rabut's profile as $R_{1}\rightarrow0$ and
$R_{2}\rightarrow\infty$ (generating a surface which is biharmonic 
at $0$), the other has a second-derivative singularity and
matches an extra data value at $0$. We study the main properties of these
profiles in section 3, including their linear representation over $\left(
0,\infty\right)  $ in terms of dilates of a basis function.

For both minimizing profiles and $p\in\left[  2,\infty\right]  $, in section 4
we establish a $L^{p}\left[  r_{1},r_{n}\right]  $-error bound of order
$O\left(  h^{3/2+1/p}\right)  $, where $h$ is the maximum distance between
consecutive radii. This is further applied to derive, in \cite{L-spl}, a
$L^{2}$-convergence result for transfinite surface interpolation with
biharmonic Beppo Levi polysplines on annuli. We also prove that the exponent
$3/2$ in the above approximation order cannot be increased in general for data
functions from the radial energy space. In section 5, we illustrate the
numerical accuracy, as well as several graphs of the resulting interpolatory 
$L$-spline profiles, and point out a connection with Johnson's recent 
construction of compactly supported radial basis functions \cite{Jo12}.

\section{Preliminaries}

\subsection{Admissible profiles}

Let $AC_{loc}$ be the vector space of functions $f:\left(  0,\infty\right)
\rightarrow\mathbb{C}$ which are absolutely continuous on any interval
$\left[  a,b\right]  $, where $0<a<b<\infty$. Throughout this paper,
$\Lambda_{0}$ denotes the space of complex-valued functions $f\in
C^{1}\left(  0,\infty\right)  $, such that $f^{\prime}\in AC_{loc}$ and the
integral (\ref{eq:semin0}) is finite. For any $f$, $g\in\Lambda_{0}$, we
define the semi-inner product%
\begin{equation}
\left\langle f,g\right\rangle _{0}:=%
{\displaystyle\int_{0}^{\infty}}
\left(  r\,f^{\prime\prime}\left(  r\right)  \overline{g}^{\prime\prime
}\left(  r\right)  +\dfrac{1}{r}\,f^{\prime}\left(  r\right)  \overline
{g}^{\prime}\left(  r\right)  \right)  \mathrm{d}r. \label{eq:semi-prod}%
\end{equation}
The induced squared seminorm $\left\Vert f\right\Vert _{0}^{2}:=\left\langle
f,f\right\rangle _{0}$ on $\Lambda_{0}$ equals the radial Beppo Levi energy
(\ref{eq:semin0}).

\begin{lemma}
\label{le:aux0}If $f\in\Lambda_{0}$, then $f$ can be extended by continuity at
$0$. Also, the relation $f^{\prime}\left(  r\right)  =O\left(  1\right)  $
holds as $r\rightarrow0$ and as $r\rightarrow\infty$.
\end{lemma}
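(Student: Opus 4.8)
The plan is to set $g:=f'$ and reduce the statement to two facts about $g$: that $g$ is bounded near $0$ and near $\infty$ (which is exactly $f'=O(1)$), and that $g$ is integrable near $0$ (which yields the continuous extension of $f$). The two finiteness conditions furnished by $\left\Vert f\right\Vert _{0}^{2}<\infty$, namely $\int_{0}^{\infty}r\,|g'(r)|^{2}\,\mathrm{d}r<\infty$ and $\int_{0}^{\infty}\frac{1}{r}|g(r)|^{2}\,\mathrm{d}r<\infty$, will be used in complementary roles: the first to control the oscillation of $g$, the second to anchor its size.

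First I would record the oscillation bound. For $0<a<b$, writing $g(b)-g(a)=\int_{a}^{b}g'$ and applying the Cauchy--Schwarz inequality with the splitting $g'=r^{-1/2}\cdot r^{1/2}g'$ gives
\[
|g(b)-g(a)|\leq\Bigl(\ln\tfrac{b}{a}\Bigr)^{1/2}\Bigl(\int_{a}^{b}r\,|g'(r)|^{2}\,\mathrm{d}r\Bigr)^{1/2}.
\]
Applied on a fixed interval this already shows $g$ is uniformly continuous there, but the logarithmic factor blows up as $a\rightarrow0$, so it cannot by itself bound $g$ at the origin. This is the main obstacle, and I would circumvent it by a dyadic localization that exploits the second integral.

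Next I would decompose $(0,\infty)$ into the intervals $I_{k}=[2^{k},2^{k+1}]$, $k\in\mathbb{Z}$, and set $E_{k}:=\int_{I_{k}}r|g'|^{2}$ and $D_{k}:=\int_{I_{k}}\frac{1}{r}|g|^{2}$, so that $\sum_{k}E_{k}$ and $\sum_{k}D_{k}$ are both finite. On each $I_{k}$ the oscillation estimate gives $\operatorname{osc}_{I_{k}}g\leq(\ln2)^{1/2}E_{k}^{1/2}$, while an averaging argument (the integrand $\frac{1}{r}|g|^{2}$ must fall below its mean somewhere) produces a point $s_{k}\in I_{k}$ with $\frac{1}{s_{k}}|g(s_{k})|^{2}\leq 2^{-k}D_{k}$, hence $|g(s_{k})|\leq(2D_{k})^{1/2}$. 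Combining these, $\sup_{I_{k}}|g|\leq(2D_{k})^{1/2}+(\ln2)^{1/2}E_{k}^{1/2}$. Since $E_{k},D_{k}\rightarrow0$ as $k\rightarrow\pm\infty$, this forces $\sup_{I_{k}}|g|\rightarrow0$ at both ends; in particular $g=f'$ is bounded on $(0,\infty)$ and tends to $0$ as $r\rightarrow0$ and as $r\rightarrow\infty$, which proves $f'=O(1)$ (indeed slightly more).

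Finally, boundedness of $g$ near $0$ gives $\int_{0}^{1}|g|<\infty$, so that $f(r)=f(1)-\int_{r}^{1}g(s)\,\mathrm{d}s$ has a finite limit as $r\rightarrow0^{+}$; defining $f(0)$ to be this limit extends $f$ continuously at the origin, which completes the proof. The only delicate point is the passage around the logarithmic divergence; everything else is a routine application of Cauchy--Schwarz and the summability of the dyadic energy pieces.
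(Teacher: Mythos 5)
Your proof is correct, and it takes a genuinely different route from the paper's. The paper handles the two claims separately and by direct integration of exact identities: the continuous extension at $0$ comes from the H\"older-type estimate $\left\vert f(r)-f(s)\right\vert \leq 2^{-1/2}\left\vert r^{2}-s^{2}\right\vert ^{1/2}\left\Vert f\right\Vert _{0}$ (Cauchy--Schwarz on $f'=t^{1/2}\cdot t^{-1/2}f'$, using only the $\tfrac{1}{r}\vert f'\vert^{2}$ term), while the boundedness of $f'$ is obtained by integrating $\left[  t^{-1}f'(t)\right]  '=t^{-3/2}\left[  t^{1/2}f''(t)-t^{-1/2}f'(t)\right]$ and $\left[  tf'(t)\right]  '=t^{1/2}\left[  t^{1/2}f''(t)+t^{-1/2}f'(t)\right]$ from $1$ to $r$ and applying Cauchy--Schwarz with the weights $t^{-3}$ and $t$, yielding explicit bounds such as $\left\vert f'(r)\right\vert \leq C_{f}\left(  r+\left\vert 1-r^{2}\right\vert ^{1/2}\right)$. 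Your dyadic scheme instead defeats the logarithmic divergence of $\int r^{-1}\,\mathrm{d}r$ by confining the oscillation estimate to blocks $[2^{k},2^{k+1}]$ of bounded logarithmic length and anchoring $\vert f'\vert$ at a good point produced by averaging $\tfrac{1}{r}\vert f'\vert^{2}$ over each block; all the steps (the oscillation bound, the choice of $s_{k}$ with $\vert g(s_{k})\vert\leq(2D_{k})^{1/2}$, the summability giving $E_{k},D_{k}\to 0$) check out. What your approach buys is a strictly stronger conclusion --- $f'(r)\to 0$ as $r\to 0$ and as $r\to\infty$, not merely $O(1)$ --- and a more portable argument that does not depend on finding the special derivative identities; what it gives up is the explicit constants and the global modulus of continuity $\left\vert r^{2}-s^{2}\right\vert^{1/2}$ for $f$ (your extension at $0$ rests on local Lipschitz continuity from the boundedness of $f'$, which is equally sufficient). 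Either version supports the only later use of the lemma, namely the vanishing of the boundary terms $r\,\eta''\overline{\psi}'$ in Theorem~\ref{thm:FI}.
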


\begin{proof}
Using the Leibniz-Newton formula $f\left(  r\right)  =f\left(  s\right)
+\int_{s}^{r}f^{\prime}\left(  t\right)  \mathrm{d}t$ for $r,s>0$, as well as
the Cauchy-Schwarz estimate%
\begin{align}
\left\vert f\left(  r\right)  -f\left(  s\right)  \right\vert  &  =\left\vert
\int_{s}^{r}t^{1/2}\left[  t^{-1/2}f^{\prime}\left(  t\right)  \right]
\mathrm{d}t\right\vert \nonumber\\
&  \leq\left\vert \int_{s}^{r}t\,\mathrm{d}t\right\vert ^{1/2}\left\vert
\int_{s}^{r}\left\vert t^{-1/2}f^{\prime}\left(  t\right)  \right\vert
^{2}\mathrm{d}t\right\vert ^{1/2}\label{eq:aux1}\\
&  \leq2^{-1/2}\left\vert r^{2}-s^{2}\right\vert ^{1/2}\left\Vert f\right\Vert
_{0},\nonumber
\end{align}
it follows that $f$ is Lipschitz of order $1/2$ on any bounded subinterval of
$\left(  0,\infty\right)  $. In particular, $f$ is uniformly continuous on
$\left(  0,1\right)  $, hence it can be extended by continuity at $0$. Next,
since $f^{\prime}\in AC_{loc}$, we have, for each $r>0$,%
\begin{align*}
r^{-1}f^{\prime}\left(  r\right)  -f^{\prime}\left(  1\right)   &  =\int
_{1}^{r}\left[  t^{-1}f^{\prime}\left(  t\right)  \right]  ^{\prime}%
\mathrm{d}t\\
&  =\int_{1}^{r}t^{-3/2}\left[  t^{1/2}f^{\prime\prime}\left(  t\right)
-t^{-1/2}f^{\prime}\left(  t\right)  \right]  \mathrm{d}t.
\end{align*}
Therefore the estimate%
\begin{eqnarray*}
\lefteqn{  \left\vert \int_{1}^{r}t^{-3/2}\left[  t^{1/2}f^{\prime\prime}\left(
t\right)  \right]  \mathrm{d}t\right\vert +\left\vert \int_{1}^{r}%
t^{-3/2}\left[  t^{-1/2}f^{\prime}\left(  t\right)  \right]  \mathrm{d}%
t\right\vert } \\
&  \leq & \left\vert \int_{1}^{r}t^{-3}\mathrm{d}t\right\vert ^{1/2}\left\{
\left\vert \int_{1}^{r}\left\vert t^{1/2}f^{\prime\prime}\left(  t\right)
\right\vert ^{2}\mathrm{d}t\right\vert ^{1/2}+\left\vert \int_{1}%
^{r}\left\vert t^{-1/2}f^{\prime}\left(  t\right)  \right\vert ^{2}%
\mathrm{d}t\right\vert ^{1/2}\right\} \\
&  \leq & 2^{1/2}\left\vert 1-r^{-2}\right\vert ^{1/2}\left\Vert f\right\Vert
_{0}%
\end{eqnarray*}
implies the existence of a constant $C_{f}\geq0$ such that%
\[
\left\vert f^{\prime}\left(  r\right)  \right\vert \leq C_{f}\left(
r+\left\vert 1-r^{2}\right\vert ^{1/2}\right)  ,\quad\forall r>0,
\]
hence $f^{\prime}\left(  r\right)  =O\left(  1\right)  $ as $r\rightarrow0$.
The boundedness of $f^{\prime}$ at $\infty$ follows similarly from%
\begin{eqnarray*}
\left\vert  rf^{\prime}\left(  r\right)  -f^{\prime}\left(  1\right) \right\vert
&  = & \left\vert  \int_{1}^{r}\left[  
tf^{\prime}\left(  t\right)  \right]  ^{\prime}\mathrm{d}t \right\vert \\
&  = & \left\vert \int_{1}^{r}t^{1/2}\left[  t^{1/2}f^{\prime\prime}\left(  t\right)
+t^{-1/2}f^{\prime}\left(  t\right)  \right]  \mathrm{d}t \right\vert \\
& \leq &
\left\vert \int_{1}^{r}t^{1/2}\left[  t^{1/2}f^{\prime\prime}\left(
t\right)  \right]  \mathrm{d}t\right\vert +\left\vert \int_{1}^{r}%
t^{1/2}\left[  t^{-1/2}f^{\prime}\left(  t\right)  \right]  
\mathrm{d}t\right\vert,
\end{eqnarray*}
followed by the Cauchy-Schwarz estimate:
\begin{eqnarray*}
\lefteqn{ \left\vert  rf^{\prime}\left(  r\right)  -f^{\prime}\left(  1\right) 
\right\vert  } \\
&  \leq & \left\vert \int_{1}^{r}t\mathrm{d}t\right\vert ^{1/2}\left\{
\left\vert \int_{1}^{r}\left\vert t^{1/2}f^{\prime\prime}\left(  t\right)
\right\vert ^{2}\mathrm{d}t\right\vert ^{1/2}+\left\vert \int_{1}%
^{r}\left\vert t^{-1/2}f^{\prime}\left(  t\right)  \right\vert ^{2}%
\mathrm{d}t\right\vert ^{1/2}\right\} \\
&  \leq & 2^{-1/2}\left\vert r^{2}-1\right\vert ^{1/2}\left\Vert f\right\Vert
_{0}.
\end{eqnarray*}
The proof is complete.
\end{proof}

\subsection{$L$-spline framework}

\label{sec2}Our $L$-spline approach is motivated by the necessary conditions
satisfied by a minimizer of (\ref{eq:semin0}). First, note that the
self-adjoint Euler-Lagrange differential operator associated to the integrand
of (\ref{eq:semin0}) is%
\begin{align*}
L_{0}  &  :=\dfrac{\mathrm{d}^{2}}{\mathrm{d}r^{2}}\left(  r\dfrac
{\mathrm{d}^{2}}{\mathrm{d}r^{2}}\right)  -\dfrac{\mathrm{d}}{\mathrm{d}%
r}\left(  \dfrac{1}{r}\dfrac{\mathrm{d}}{\mathrm{d}r}\right) \\
&  =r\dfrac{\mathrm{d}^{4}}{\mathrm{d}r^{4}}+2\dfrac{\mathrm{d}^{3}%
}{\mathrm{d}r^{3}}-\dfrac{1}{r}\dfrac{\mathrm{d}^{2}}{\mathrm{d}r^{2}}%
+\dfrac{1}{r^{2}}\dfrac{\mathrm{d}}{\mathrm{d}r}\\
&  =r\left(  \dfrac{\mathrm{d}^{2}}{\mathrm{d}r^{2}}+\dfrac{1}{r}%
\dfrac{\mathrm{d}}{\mathrm{d}r}\right)  ^{2}.
\end{align*}
The substitution $r=e^{v}$, $\frac{\mathrm{d}}{\mathrm{d}v}=r\frac{\mathrm{d}%
}{\mathrm{d}r}$, changes $L_{0}$ into a differential operator with constant
coefficients in variable $v$. Factoring this operator and reverting to
variable $r$, we obtain%
\[
L_{0}=\dfrac{1}{r^{3}}\left(  r\dfrac{\mathrm{d}}{\mathrm{d}r}-2\right)
^{2}\left(  r\dfrac{\mathrm{d}}{\mathrm{d}r}\right)  ^{2},
\]
and we identify the null space of $L_{0}$ as the $4$-dimensional vector space%
\begin{equation}
\mathrm{Ker}\,L_{0}=\mathrm{span}\left\{  r^{2},r^{2}\ln r,1,\ln r\right\}  .
\label{eq:ker}%
\end{equation}
Hence usual calculus of variations suggests that a minimizer of
(\ref{eq:semin0}) subject to prescribed values at $r_{1}<\ldots<r_{n}$ should
be in $\mathrm{Ker}\,L_{0}$ on each of the open intervals $\left(
0,r_{1}\right)  $, $\left(  r_{1},r_{2}\right)  $, \ldots, $\left(
r_{n},\infty\right)  $.

Second, as observed in \cite{MSc}, the convergence of the radial Beppo Levi
energy integral (\ref{eq:semin0}) at $0$ and $\infty$ implies that a minimizer
should in fact belong to $\mathrm{span}\left\{  r^{2},r^{2}\ln r,1\right\}  $
for $r\in\left(  0,r_{1}\right)  $, and to $\mathrm{span}\left\{  1,\ln
r\right\}  $ for $r\in\left(  r_{n},\infty\right)  $. These two spans are null
spaces of the following left and right `boundary' operators:%
\[%
\begin{array}
[c]{l}%
G_{0}:=\dfrac{\mathrm{d}^{3}}{\mathrm{d}r^{3}}-\dfrac{1}{r}\dfrac
{\mathrm{d}^{2}}{\mathrm{d}r^{2}}+\dfrac{1}{r^{2}}\dfrac{\mathrm{d}%
}{\mathrm{d}r}=\dfrac{1}{r^{3}}\left(  r\dfrac{\mathrm{d}}{\mathrm{d}%
r}-2\right)  ^{2}\left(  r\dfrac{\mathrm{d}}{\mathrm{d}r}\right)  ,\\
\\
R_{0}:=\dfrac{1}{r}\left[  \dfrac{\mathrm{d}^{2}}{\mathrm{d}r^{2}}+\dfrac
{1}{r}\dfrac{\mathrm{d}}{\mathrm{d}r}\right]  =\dfrac{1}{r^{3}}\left(
r\dfrac{\mathrm{d}}{\mathrm{d}r}\right)  ^{2}.
\end{array}
\]

Let $\rho$ be the given set of positive knots $r_{1}<\ldots<r_{n}$. 
The above considerations support the following definition.

\begin{definition}
\label{def:natLspline}\emph{(a)} A function $\eta:\left[  0,\infty\right)
\rightarrow\mathbb{C}$ is called a \emph{Beppo Levi }$L_{0}$\emph{-spline on
}$\rho$ if the following conditions hold:

\emph{(i)} $L_{0}\eta\left(  r\right)  =0,\;\forall r\in\left(  r_{j}%
,r_{j+1}\right)  ,\;\forall j\in\left\{  1,\ldots,n-1\right\}  $;

\emph{(ii)} $G_{0}\eta\left(  r\right)  =0$, $\forall r\in\left(
0,r_{1}\right)  $, and $R_{0}\eta\left(  r\right)  =0$, $\forall r>r_{n}$;

\emph{(iii)} $\eta$ is $C^{2}$-continuous at each knot $r_{1}$, \ldots,
$r_{n}$.

We denote by $\mathcal{S}_{0}\left(  \rho\right)  \subset\Lambda_{0}$ the
class of all Beppo Levi $L_{0}$-splines on $\rho$.

\noindent\emph{(b)} A Beppo Levi $L_{0}$-spline $\eta\in\mathcal{S}_{0}\left(
\rho\right)  $ that satisfies $\eta\in\mathrm{span}\left\{  r^{2},1\right\}  $
for $r\in\left(  0,r_{1}\right)  $ is called \emph{non-singular}.
\end{definition}

\begin{remark}
\emph{On any interval of positive real numbers, Kounchev \cite[p.\ 104]{Ku01}
characterized the null space of }$L_{0}$ \emph{as an extended complete
Chebyshev (ECT) system in the sense of Karlin and Ziegler \cite{KZ66}, due to
the representation}%
\[
L_{0}=D_{4}D_{3}D_{2}D_{1}=\dfrac{\mathrm{d}}{\mathrm{d}r}r\dfrac{\mathrm{d}%
}{\mathrm{d}r}\frac{1}{r}\dfrac{\mathrm{d}}{\mathrm{d}r}r\dfrac{\mathrm{d}%
}{\mathrm{d}r},
\]
\emph{where }$D_{1}=\frac{\mathrm{d}}{\mathrm{d}r}$\emph{, }$D_{2}=D_{4}%
=\frac{\mathrm{d}}{\mathrm{d}r}\left(  r\cdot\right)  $\emph{, }$D_{3}%
=\frac{\mathrm{d}}{\mathrm{d}r}\left(  \frac{1}{r}\cdot\right)  $\emph{.
Following Schumaker \cite[p.\ 398]{LLS07}, we note that }$L_{0}$\emph{ also
admits the factorization}%
\[
L_{0}=L^{\ast}L,
\]
\emph{where }$L:=\frac{1}{\sqrt{r}}\frac{\mathrm{d}}{\mathrm{d}r}%
r\frac{\mathrm{d}}{\mathrm{d}r}=r^{3/2}R_{0}$ \emph{and }$L^{\ast}$\emph{
denotes the formal adjoint of }$L$\emph{. This type of factorization was
employed in the early studies of Ahlberg, Nilson, and Walsh \cite{ANW67} and
Schultz and Varga \cite{SV67} to define `generalized splines' and `}%
$L$\emph{-splines' as functions that are piecewise in the null space of
}$L^{\ast}L$\emph{ and satisfy certain continuity conditions. Our definition
adopts the subsequent terminology of Lucas \cite{L70} and Jerome and Pierce
\cite{JP72}, according to which `}$L$\emph{-splines' are piecewise in the null
space of a general self-adjoint differential operator }$L$\emph{ with variable
coefficients.}
\end{remark}

\begin{remark}
\emph{The Beppo Levi boundary conditions (ii) of our definition do not agree
with the usual `natural' (or `type II') conditions from }$L$\emph{-spline
literature \cite{L70,JP72,LLS07}, the latter being formulated in terms of one
and the same differential operator outside the interpolation domain or at the
endpoints of this domain.}
\end{remark}

\section{Interpolation with Beppo Levi $L_{0}$-splines}

Due to the new type of boundary conditions, the main properties of
interpolation with Beppo Levi $L_{0}$-splines are not direct consequences of
classical $L$-spline theory, but will be established in this section based on
our first theorem below.

\subsection{A fundamental orthogonality result}

\begin{theorem}
\label{thm:FI}Let $\eta\in\mathcal{S}_{0}\left(  \rho\right)  $ and $\psi
\in\Lambda_{0}$ such that%
\begin{equation}
\psi\left(  r_{j}\right)  =0,\quad\forall j\in\left\{  1,\ldots,n\right\}  .
\label{eq:zero-val}%
\end{equation}
If, in addition, we assume that either $\psi$ satisfies%
\begin{equation}
\psi\left(  0\right)  =0, \label{eq:zero-lim}%
\end{equation}
or $\eta$ is non-singular, then%
\begin{equation}
\left\langle \eta,\psi\right\rangle _{0}=0. \label{eq:ortho}%
\end{equation}

\end{theorem}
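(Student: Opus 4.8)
The plan is to prove the orthogonality relation (\ref{eq:ortho}) by integration by parts, using that $L_{0}$ is precisely the Euler--Lagrange operator attached to the integrand of (\ref{eq:semi-prod}); the interior contributions will then collapse onto the defining spline conditions, and only boundary terms at $0$ and $\infty$ will survive. First I would split $\left\langle \eta,\psi\right\rangle _{0}$ into the sum of its integrals over the open intervals $\left(  0,r_{1}\right)  ,\left(  r_{1},r_{2}\right)  ,\ldots,\left(  r_{n},\infty\right)  $, on each of which $\eta$ lies in a subspace of $\mathrm{Ker}\,L_{0}$ and is therefore smooth.

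On any such interval, integrating $r\,\eta^{\prime\prime}\overline{\psi}^{\prime\prime}$ by parts twice and $r^{-1}\eta^{\prime}\overline{\psi}^{\prime}$ once yields the Lagrange identity
\begin{equation}
\int\!\Bigl(r\,\eta^{\prime\prime}\overline{\psi}^{\prime\prime}+\tfrac{1}{r}\,\eta^{\prime}\overline{\psi}^{\prime}\Bigr)\mathrm{d}r=\Bigl[B(r)\Bigr]+\int\!\bigl(L_{0}\eta\bigr)\,\overline{\psi}\,\mathrm{d}r,\qquad B(r):=r\,\eta^{\prime\prime}\overline{\psi}^{\prime}-\bigl(r\,\eta^{\prime\prime}\bigr)^{\prime}\overline{\psi}+\tfrac{1}{r}\,\eta^{\prime}\overline{\psi},
\label{eq:lagrange}
\end{equation}
where $\left[\,\cdot\,\right]$ denotes evaluation between the endpoints of the interval and I have used $L_{0}\eta=(r\eta^{\prime\prime})^{\prime\prime}-(r^{-1}\eta^{\prime})^{\prime}$. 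Since $\mathrm{Ker}\,G_{0}=\mathrm{span}\{r^{2},r^{2}\ln r,1\}$ and $\mathrm{Ker}\,R_{0}=\mathrm{span}\{1,\ln r\}$ are both contained in $\mathrm{Ker}\,L_{0}=\mathrm{span}\{r^{2},r^{2}\ln r,1,\ln r\}$, condition (ii) forces $L_{0}\eta=0$ on $\left(  0,r_{1}\right)  $ and on $\left(  r_{n},\infty\right)  $ as well, while (i) gives it on the remaining intervals; hence every interior integral in (\ref{eq:lagrange}) vanishes, and summing leaves only the telescoped boundary term $\lim_{r\rightarrow\infty}B(r)-\lim_{r\rightarrow0^{+}}B(r)$.

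Next I would show that the contributions at the interior knots cancel. At each $r_{j}$ the first summand of $B$ is continuous, because $\eta\in C^{2}$ and $\psi\in C^{1}$ make $r\eta^{\prime\prime}\overline{\psi}^{\prime}$ continuous there, so its one-sided limits agree; the other two summands both carry the factor $\overline{\psi}\left(  r_{j}\right)  $, which vanishes by (\ref{eq:zero-val}). For the limit at $\infty$ I use that $\eta=\mu+\nu\ln r$ on $\left(  r_{n},\infty\right)  $, so every term of $B(r)$ is $O\left(  r^{-1}\right)  $ after invoking the bounds $\psi^{\prime}(r)=O(1)$ and $\psi(r)=O(r)$ from Lemma \ref{le:aux0}; thus $\lim_{r\rightarrow\infty}B(r)=0$.

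The remaining limit at $0$ is the crux. Writing $\eta=\alpha+\beta r^{2}+\gamma\,r^{2}\ln r$ on $\left(  0,r_{1}\right)  $, a direct computation shows that the coefficients of $\overline{\psi}$ coming from $-(r\eta^{\prime\prime})^{\prime}$ and from $r^{-1}\eta^{\prime}$ have their $\ln r$ and $\beta$ parts cancel and combine to the constant $-4\gamma$, while every term multiplying $\overline{\psi}^{\prime}$ is $O\left(  r\ln r\right)  $; using $\psi^{\prime}=O(1)$ and the continuous extension $\psi(r)\rightarrow\psi(0)$ from Lemma \ref{le:aux0}, this gives $\lim_{r\rightarrow0^{+}}B(r)=-4\gamma\,\overline{\psi(0)}$. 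I expect this step to be the main obstacle, since it is exactly here that the dichotomy in the hypotheses enters: if $\eta$ is non-singular then $\gamma=0$, and if instead (\ref{eq:zero-lim}) holds then $\overline{\psi(0)}=0$; either way the residual vanishes. Combining the three boundary evaluations then yields $\left\langle \eta,\psi\right\rangle _{0}=0$, completing the proof.
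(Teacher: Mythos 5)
Your argument is correct and follows essentially the same route as the paper: integration by parts over the subintervals $\left(0,r_{1}\right),\ldots,\left(r_{n},\infty\right)$, telescoping at the knots via the $C^{2}$-continuity of $\eta$ and the vanishing of $\psi$ there, decay of the boundary terms at $0$ and $\infty$, and a residual $4\gamma\,\overline{\psi}\left(0\right)$ that is killed by either hypothesis. The only cosmetic difference is that the paper stops after a single integration by parts and observes that $r\eta^{\prime\prime\prime}+\eta^{\prime\prime}-r^{-1}\eta^{\prime}$ is piecewise constant (equal to $4\gamma$ on $\left(0,r_{1}\right)$ and to $0$ on $\left(r_{n},\infty\right)$), whereas you integrate by parts a second time to exhibit $L_{0}\eta$ explicitly; both bookkeepings produce the identical residual term.
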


\begin{proof}
Using the notation $r_{0}:=0$, $r_{n+1}:=\infty$, (\ref{eq:semi-prod})
implies%
\[
\left\langle \eta,\psi\right\rangle _{0}=\sum_{j=1}^{n+1}\int_{r_{j-1}}%
^{r_{j}}\left[  r\,\eta^{\prime\prime}\left(  r\right)  \overline{\psi
}^{\prime\prime}\left(  r\right)  +\frac{1}{r}\eta^{\prime}\left(  r\right)
\overline{\psi}^{\prime}\left(  r\right)  \right]  \mathrm{d}r.
\]
Since $\psi^{\prime}\in AC_{loc}$, we may apply integration by parts to the
first term of each integral to obtain%
\begin{eqnarray*}
\left\langle \eta,\psi\right\rangle _{0}  &  = & \sum_{j=1}^{n+1}\left[
r\,\eta^{\prime\prime}\left(  r\right)  \overline{\psi}^{\prime}\left(
r\right)  \right]  _{r_{j-1}}^{r_{j}}\\
& & \mbox{} -\sum_{j=1}^{n+1}\int_{r_{j-1}}^{r_{j}}
\overline{\psi}^{\prime}(r)  \left[  r\,\eta^{\prime\prime\prime} (r)  
+ \eta^{\prime\prime}(r)  -\frac{1}{r}\eta^{\prime}(r)
\right]  \mathrm{d}r.
\end{eqnarray*}
Note that the first sum of the above right-hand side is telescopic due to the
continuity of $\eta^{\prime\prime}$. Hence, we only need to evaluate the
boundary terms of this sum corresponding to $r:=r_{0}=0$ and $r:=r_{n+1}%
=\infty$. We invoke the specific form of $\eta\in\mathcal{S}_{0}\left(
\rho\right)  $ on the extreme sub-intervals:$\ \eta\in\mathrm{span}\left\{
r^{2},r^{2}\ln r,1\right\}  $ for $r\in\left(  0,r_{1}\right)  $, and $\eta
\in\mathrm{span}\left\{  1,\ln r\right\}  $ for $r\in\left(  r_{n}%
,\infty\right)  $. By Lemma~\ref{le:aux0}, it follows that%
\[
r\,\eta^{\prime\prime}\left(  r\right)  \overline{\psi}^{\prime}\left(
r\right)  =\left\{
\begin{array}
[c]{l}%
O\left(  r\left\vert \ln r\right\vert \right)  ,\quad\text{as }r\rightarrow
0,\\
O\left(  r^{-1}\right)  ,\quad\text{as }r\rightarrow\infty.
\end{array}
\right.
\]
Therefore the boundary terms vanish in the limit at $0$ and $\infty$.

On the other hand, the fact that $\eta\in\mathrm{Ker}\,L_{0}$ on each
sub-interval implies the existence, for each $j\in\left\{  1,\ldots
,n+1\right\}  $, of a constant $b_{j}$ such that%
\[
r\,\eta^{\prime\prime\prime}\left(  r\right)  +\eta^{\prime\prime}\left(
r\right)  -\frac{1}{r}\eta^{\prime}\left(  r\right)  =b_{j},\quad\forall
r\in\left(  r_{j-1},r_{j}\right)  ,\ \forall j\in\left\{  1,\ldots
,n+1\right\}  ,
\]
where $b_{n+1}=0$, since $\eta\left(  r\right)  \in\mathrm{span}\left\{  1,\ln
r\right\}  $ for $r\in\left(  r_{n},\infty\right)  $. Hence%
\[
\left\langle \eta,\psi\right\rangle _{0}=-\sum_{j=1}^{n}b_{j}\int_{r_{j-1}%
}^{r_{j}}\overline{\psi}^{\prime}\left(  r\right)  \mathrm{d}r=-\sum_{j=1}%
^{n}b_{j}\left[  \overline{\psi}\left(  r\right)  \right]  _{r_{j-1}}^{r_{j}%
}=b_{1}  \overline{\psi}\left(  0\right)   ,
\]
the last equality being due to the vanishing condition (\ref{eq:zero-val}) of
$\psi$ at the positive knots. Therefore (\ref{eq:zero-lim}) implies
$\left\langle \eta,\psi\right\rangle _{0}=0$, as required. The same conclusion
applies assuming that $\eta$ is non-singular, since $b_{1}=0$ in that case.
\end{proof}

\subsection{Existence, uniqueness, variational characterization}

Our $L$-spline formulation shows that the extra degree of freedom of any
$\eta\in\mathcal{S}_{0}\left(  \rho\right)  $ on the leftmost interval
$\left(  0,r_{1}\right)  $ allows one more restriction to be imposed on $\eta
$, apart from the interpolation conditions at the knots. The next result
obtains existence and uniqueness for two types of interpolatory profiles from
$\mathcal{S}_{0}\left(  \rho\right)  $: one matching an extra data value at
$0$, the other being non-singular.

\begin{theorem}
\label{thm:EU}Let $\alpha$, $\nu_{1}$, $\nu_{2}$,\ldots, $\nu_{n}$ be
arbitrary real values.

\emph{(a)} There exists a unique Beppo Levi $L_{0}$-spline $\sigma^{A}%
\in\mathcal{S}_{0}\left(  \rho\right)  $, such that%
\begin{equation}
\left\{
\begin{array}
[c]{l}%
\sigma^{A}\left(  r_{j}\right)  =\nu_{j},\quad\forall j\in\left\{
1,\ldots,n\right\}  ,\\
\sigma^{A}\left(  0\right)  =\alpha.
\end{array}
\right.  \label{eq:typeA}%
\end{equation}

\emph{(b)} There exists a unique non-singular Beppo Levi $L_{0}$-spline
$\sigma^{B}\in\mathcal{S}_{0}\left(  \rho\right)  $, such that%
\begin{equation}
\sigma^{B}\left(  r_{j}\right)  =\nu_{j},\quad\forall j\in\left\{
1,\ldots,n\right\}  . \label{eq:typeB}%
\end{equation}

\end{theorem}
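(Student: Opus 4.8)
The plan is to reduce both parts to the invertibility of a finite-dimensional linear evaluation map, deriving injectivity directly from the orthogonality result of Theorem~\ref{thm:FI} and matching it against a dimension count.

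First I would determine $\dim\mathcal{S}_0(\rho)$ by a parameter count. On the leftmost interval $(0,r_1)$, condition (ii) forces $\eta\in\mathrm{Ker}\,G_0=\mathrm{span}\{r^2,r^2\ln r,1\}$, contributing $3$ free parameters; on each interior interval $(r_j,r_{j+1})$, condition (i) gives $\eta\in\mathrm{Ker}\,L_0$, contributing $4$ parameters; and on $(r_n,\infty)$, condition (ii) gives $\eta\in\mathrm{Ker}\,R_0=\mathrm{span}\{1,\ln r\}$, contributing $2$. Viewing $\mathcal{S}_0(\rho)$ as the subspace of this $(4n+1)$-parameter family cut out by the $C^2$-continuity requirement (iii), which imposes at most $3n$ linear constraints, I obtain the lower bound
\[
\dim\mathcal{S}_0(\rho)\geq[3+4(n-1)+2]-3n=n+1.
\]
For the non-singular subclass $\mathcal{S}_0^{ns}(\rho)$, the leftmost interval contributes only $2$ parameters (from $\mathrm{span}\{r^2,1\}$), so the same reasoning yields $\dim\mathcal{S}_0^{ns}(\rho)\geq n$.

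Next I would introduce the evaluation maps $T\eta:=(\eta(0),\eta(r_1),\ldots,\eta(r_n))\in\mathbb{C}^{n+1}$ on $\mathcal{S}_0(\rho)$, and $T^{ns}\eta:=(\eta(r_1),\ldots,\eta(r_n))\in\mathbb{C}^{n}$ on $\mathcal{S}_0^{ns}(\rho)$. The crux is injectivity. For part (a), suppose $\eta\in\ker T$, that is $\eta(r_j)=0$ for all $j$ and $\eta(0)=0$. Taking $\psi:=\eta$ in Theorem~\ref{thm:FI}---legitimate since $\eta$ satisfies both (\ref{eq:zero-val}) and (\ref{eq:zero-lim}) and $\mathcal{S}_0(\rho)\subset\Lambda_0$---yields $\|\eta\|_0^2=\langle\eta,\eta\rangle_0=0$. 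The non-negativity of the integrand in (\ref{eq:semin0}) then forces $\eta'\equiv0$ on $(0,\infty)$, so $\eta$ is constant; combined with $\eta(r_1)=0$ and continuity, this gives $\eta\equiv0$. For part (b), the same computation applies to $\eta\in\ker T^{ns}$: here $\eta$ is non-singular, so Theorem~\ref{thm:FI} again delivers $\langle\eta,\eta\rangle_0=0$ without invoking a value at $0$, and we conclude $\eta\equiv0$.

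Finally, injectivity of $T$ forces $\dim\mathcal{S}_0(\rho)\leq n+1$, which together with the lower bound gives equality; hence $T$ is a linear isomorphism between spaces of dimension $n+1$, so the system (\ref{eq:typeA}) admits a unique solution $\sigma^A$ for every choice of data, proving (a). The identical dimension-matching argument applied to $T^{ns}$, an injective map between spaces of dimension $n$, establishes (b). I expect the dimension bookkeeping to be the more delicate point rather than the injectivity: the argument must rely on the lower bound from the parameter count and the upper bound supplied by injectivity pinning $\dim\mathcal{S}_0(\rho)$ exactly, so that no separate verification of the rank of the $3n$ continuity constraints is required. The injectivity step itself is immediate once $\psi=\eta$ is substituted into the orthogonality relation.
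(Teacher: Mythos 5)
Your proof is correct, and it reaches the conclusion by a slightly different organization of the linear algebra than the paper, although the decisive step is the same in both: substituting $\eta=\psi$ into Theorem~\ref{thm:FI} to get $\left\Vert \eta\right\Vert _{0}=0$, hence $\eta$ constant, hence $\eta\equiv 0$. The paper instead reduces each part to an explicit \emph{square} linear system: it restricts the unknown spline to the compact interval $\left[ r_{1},r_{n}\right]$ (resp.\ $\left[ 0,r_{n}\right]$), shows that the pieces on $\left( 0,r_{1}\right)$ and $\left( r_{n},\infty\right)$ are uniquely recovered from $C^{1}$ matching, and encodes the boundary operators $G_{0}$, $R_{0}$ as the endpoint conditions (\ref{eq:endpt}) so that the count of equations (continuity, endpoint, interpolation) exactly equals the count of coefficients, namely $4\left( n-1\right)$ for part (b) and $4n-1$ for part (a); uniqueness of the homogeneous solution then gives invertibility. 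Your route replaces that reduction by a global parameter count on all $n+1$ subintervals together with rank--nullity (giving $\dim\mathcal{S}_{0}\left( \rho\right) \geq n+1$ and $\dim\mathcal{S}_{0}^{ns}\left( \rho\right) \geq n$), and lets injectivity of the evaluation map supply the matching upper bound. This buys you a cleaner argument: you never need to derive the endpoint conditions (\ref{eq:endpt}) or check that the system is square, and the "at most $3n$ constraints" phrasing correctly makes the lower bound immune to any rank deficiency. What it costs is only that the dimension is pinned indirectly rather than by exhibiting an invertible matrix; the paper's square system is closer to something one could hand to a computer, though in practice the paper computes via the basis-function representation (\ref{eq:rbf-k0}) anyway. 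One small point worth making explicit in your write-up: the evaluation $\eta\left( 0\right)$ is well defined because Lemma~\ref{le:aux0} extends every element of $\Lambda_{0}$ continuously to $0$, and $\mathcal{S}_{0}\left( \rho\right) \subset\Lambda_{0}$ holds because the prescribed null spaces on the extreme intervals make the energy integral converge at $0$ and $\infty$ --- both facts you use implicitly and both are available in the paper.
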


\begin{proof}
For convenience, we first prove part (b). It is sufficient to establish the
existence of a unique function $\widetilde{\sigma}:\left[  r_{1},r_{n}\right]
\longrightarrow\mathbb{C}$ with the properties: i) $\widetilde{\sigma}%
\in\mathrm{Ker}L_{0}$ on $\left(  r_{j-1},r_{j}\right)  $ for $j\in\left\{
2,\ldots,n\right\}  $; ii) $\widetilde{\sigma}\in C^{2}\left(  r_{1}%
,r_{n}\right)  $; iii) the interpolation conditions (\ref{eq:typeB}) hold for
$\widetilde{\sigma}$ in place of $\sigma^{B}$; and iv) $\widetilde{\sigma}$
satisfies the endpoint conditions:%
\begin{equation}
\left\{
\begin{array}
[c]{l}%
\left[  \left(  r\frac{\mathrm{d}}{\mathrm{d}r}\right)  \left(  r\frac
{\mathrm{d}}{\mathrm{d}r}-2\right)  \widetilde{\sigma}\left(  r\right)
\right]  _{r\rightarrow r_{1}^{+}}=0,\\
\left[  \left(  r\frac{\mathrm{d}}{\mathrm{d}r}\right)  ^{2}\widetilde{\sigma
}\left(  r\right)  \right]  _{r\rightarrow r_{n}^{-}}=0.
\end{array}
\right.  \label{eq:endpt}%
\end{equation}
Indeed, one can uniquely determine the constants $c_{m}$ for $m\in\left\{
1,2,3,4\right\}  $ such that the function $\sigma^{B}$ defined by%
\[
\sigma^{B}\left(  r\right)  :=\left\{
\begin{array}
[c]{ll}%
c_{1}r^{2}+c_{2}, & \text{if }0<r<r_{1},\\
\widetilde{\sigma}\left(  r\right)  , & \text{if }r_{1}\leq r\leq r_{n},\\
c_{3}+c_{4}\ln r, & \text{if }r_{n}<r,
\end{array}
\right.
\]
is continuous and has a continuous first derivative at $r_{1}$ and $r_{n}$.
This also ensures that $\frac{\mathrm{d}^{2}}{\mathrm{d}r^{2}}\sigma^{B}$ is
continuous at $r_{1}$ and $r_{n}$, due to (\ref{eq:endpt}) and the fact that
$\left(  r\frac{\mathrm{d}}{\mathrm{d}r}\right)  \left(  r\frac{\mathrm{d}%
}{\mathrm{d}r}-2\right)  \sigma^{B}\left(  r\right)  =0$, $\forall0<r<r_{1}$,
and $R_{0}\sigma^{B}\left(  r\right)  =0$, $\forall r>r_{n}$. Hence,
$\sigma^{B}$ verifies the conditions required by the conclusion. Conversely,
the above properties of $\widetilde{\sigma}$ hold with necessity for the
restriction to $\left[  r_{1},r_{N}\right]  $ of any non-singular Beppo Levi
$L_{0}$-spline $\sigma^{B}\in\mathcal{S}_{0}\left(  \rho\right)  $ that
satisfies (\ref{eq:typeB}).

Note that a function $\widetilde{\sigma}$ as described in the previous
paragraph is determined by four coefficients on each subinterval $\left(
r_{j-1},r_{j}\right)  $, $j\in\left\{  2,\ldots,n\right\}  $. These
coefficients are required to satisfy the homogeneous linear equations given by
three $C^{2}$-continuity conditions at each interior knot $r_{2}%
,\ldots,r_{n-1}$ and two endpoint conditions (\ref{eq:endpt}), as well as the
$n$ interpolation conditions (\ref{eq:typeB}). Hence this $4\left(
n-1\right)  \times4\left(  n-1\right)  $ system of linear equations becomes
homogeneous if we assume zero interpolation values: $\nu_{j}=0$, $j\in\left\{
1,\ldots,n\right\}  $. Let $\widetilde{\sigma}$ be determined by an arbitrary
solution of this homogeneous system and let $\sigma^{B}\in\mathcal{S}%
_{0}\left(  \rho\right)  $ be the unique extension of $\widetilde{\sigma}$ to
a non-singular Beppo Levi $L_{0}$-spline as in the previous paragraph.
Choosing $\eta=\psi:=\sigma^{B}$ in Theorem~\ref{thm:FI}, the orthogonality
relation (\ref{eq:ortho}) implies $\left\Vert \sigma^{B}\right\Vert _{0}=0$,
hence $\sigma^{B}\ $is a constant function on $\left(  0,\infty\right)  $.
Since $\sigma^{B}\left(  r_{j}\right)  =0$, $j\in\left\{  1,\ldots,n\right\}
$, we obtain $\sigma^{B}\equiv0$, which shows that the homogeneous linear
system admits only the trivial solution. Therefore the $4\left(  n-1\right)
\times4\left(  n-1\right)  $ non-homogeneous system corresponding to arbitrary
interpolation values has a unique solution, as required.

Part (a) of the theorem is similarly reduced to the existence of a unique
function $\widetilde{\sigma}$, this time defined on $\left[  0,r_{n}\right]
$, with the properties: i) $\widetilde{\sigma}\in\mathrm{Ker}L_{0}$ on
$\left(  r_{j-1},r_{j}\right)  $ for $j\in\left\{  2,\ldots,n\right\}  $,
while $\widetilde{\sigma}\in\mathrm{span}\left\{  r^{2},r^{2}\ln r,1\right\}
$ on $[0,r_{1})$; ii) $\widetilde{\sigma}\in C^{2}\left(  0,r_{n}\right)  $;
iii) $\widetilde{\sigma}$ satisfies (\ref{eq:typeA}) (in place of $\sigma^{A}%
$); iv) $\widetilde{\sigma}$ satisfies the second of the endpoint conditions
(\ref{eq:endpt}). All these constraints amount to a system of $4n-1$ linear
equations for as many coefficients. Then arguments similar to those of the
last paragraph show that the corresponding homogeneous linear system has only
the trivial solution, which completes the proof.
\end{proof}

\begin{remark}
\emph{Existence and uniqueness results for }$L_{0}$\emph{-spline interpolation
have previously been obtained by Kounchev \cite{Ku98,Ku01} under different
boundary conditions. Specifically, Kounchev uses a clamped condition at the
right-end point }$r_{n}$\emph{ (}i.e.\emph{, the first-derivative is
prescribed at }$r_{n}$\emph{), while, at the left-end point }$r_{1}$\emph{,
his }$L_{0}$\emph{-spline either is clamped or it satisfies the
non-singularity condition stated in part (b) of
Definition~\ref{def:natLspline}.}
\end{remark}

The next result shows that our $L_{0}$-spline interpolants are indeed profiles
of radially symmetric thin plate spline surfaces minimizing the radial Beppo
Levi seminorm (\ref{eq:semin0}).

\begin{theorem}
\label{thm:var-char}Given arbitrary real values $\alpha$, $\nu_{1}$, $\nu_{2}%
$,\ldots, $\nu_{n}$, let $\sigma^{A}$ and $\sigma^{B}$ denote the unique Beppo
Levi $L_{0}$-splines obtained in Theorem~\ref{thm:EU}. Then $\left\Vert
\sigma^{A}\right\Vert _{0}<\left\Vert f\right\Vert _{0}$ whenever $f\in
\Lambda_{0}$ satisfies \emph{(\ref{eq:typeA})} in place of $\sigma^{A}$ and
$f\not =\sigma^{A}$. Also, if $f\in\Lambda_{0}$, $f\not =\sigma^{B}$, and $f$
satisfies conditions \emph{(\ref{eq:typeB})} in place of $\sigma^{B}$, then
$\left\Vert \sigma^{B}\right\Vert _{0}<\left\Vert f\right\Vert _{0}$.
\end{theorem}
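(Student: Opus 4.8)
The plan is to prove both variational characterizations simultaneously by reducing each to the fundamental orthogonality result of Theorem~\ref{thm:FI}. The standard strategy for such minimum-norm interpolation statements is to write an arbitrary competitor $f$ as $f = \sigma + (f-\sigma)$, where $\sigma$ is the candidate minimizer ($\sigma^A$ or $\sigma^B$), and then expand $\left\Vert f\right\Vert_0^2$ using the semi-inner product. First I would set $\psi := f - \sigma$ and verify that $\psi \in \Lambda_0$, which is immediate since $\Lambda_0$ is a vector space and both $f,\sigma$ lie in it. The crucial point is that the interpolation conditions force $\psi$ to satisfy the vanishing hypotheses of Theorem~\ref{thm:FI}: subtracting the conditions satisfied by $f$ and by $\sigma$ gives $\psi(r_j)=0$ for all $j\in\{1,\ldots,n\}$, i.e.\ (\ref{eq:zero-val}).

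Next I would split into the two cases according to which extra condition applies. In the $\sigma^A$ case, both $f$ and $\sigma^A$ satisfy $\sigma^A(0)=f(0)=\alpha$, so $\psi(0)=0$, which is exactly (\ref{eq:zero-lim}); hence Theorem~\ref{thm:FI} (applied with $\eta:=\sigma^A$) yields $\langle \sigma^A, \psi\rangle_0 = 0$. In the $\sigma^B$ case, $\sigma^B$ is non-singular by construction, so the second alternative hypothesis of Theorem~\ref{thm:FI} is met and again $\langle \sigma^B, \psi\rangle_0 = 0$. In either situation I then expand
\[
\left\Vert f\right\Vert_0^2 = \left\Vert \sigma+\psi\right\Vert_0^2 = \left\Vert \sigma\right\Vert_0^2 + 2\,\mathrm{Re}\langle \sigma,\psi\rangle_0 + \left\Vert \psi\right\Vert_0^2 = \left\Vert \sigma\right\Vert_0^2 + \left\Vert \psi\right\Vert_0^2,
\]
the cross term dropping out by orthogonality. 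This already gives $\left\Vert \sigma\right\Vert_0 \le \left\Vert f\right\Vert_0$.

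The remaining work is to upgrade this to the claimed \emph{strict} inequality when $f\neq\sigma$, and here lies the one genuinely delicate point: $\left\Vert\cdot\right\Vert_0$ is only a seminorm, so $\left\Vert\psi\right\Vert_0=0$ does not by itself force $\psi=0$. The hard part will therefore be showing that under our hypotheses $\left\Vert\psi\right\Vert_0=0$ does imply $\psi\equiv 0$. The route is to note that $\left\Vert\psi\right\Vert_0^2 = \int_0^\infty(r|\psi''|^2 + \tfrac1r|\psi'|^2)\,\mathrm{d}r = 0$ forces $\psi'\equiv 0$ on $(0,\infty)$ (since the integrand is nonnegative and $\tfrac1r|\psi'|^2$ must vanish), so $\psi$ is constant; combined with the vanishing of $\psi$ at the knots $r_1,\ldots,r_n$ from (\ref{eq:zero-val}), that constant must be zero, whence $\psi\equiv0$ and $f=\sigma$, a contradiction. (In the $\sigma^A$ case one may alternatively invoke $\psi(0)=0$ to pin down the constant.) Consequently $\left\Vert\psi\right\Vert_0>0$ strictly whenever $f\neq\sigma$, and the expansion above delivers $\left\Vert\sigma\right\Vert_0 < \left\Vert f\right\Vert_0$ for both $\sigma^A$ and $\sigma^B$, completing the proof.
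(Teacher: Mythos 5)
Your proposal is correct and follows essentially the same route as the paper: the decomposition $\psi:=f-\sigma$, the appeal to Theorem~\ref{thm:FI} (with the two cases matched to its two alternative hypotheses), the resulting Pythagorean identity, and the strictness argument via $\left\Vert\psi\right\Vert_0=0\Rightarrow\psi'\equiv0\Rightarrow\psi$ constant $\Rightarrow\psi\equiv0$ using the knot values. No meaningful differences from the paper's proof.
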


\begin{proof}
Assume that $f\in\Lambda_{0}$, $f$ satisfies the same interpolation conditions
(\ref{eq:typeA}) as $\sigma^{A}$, and let $\eta:=\sigma^{A}$, $\psi
:=f-\sigma^{A}$. Since $\psi$ satisfies (\ref{eq:zero-val}) and
(\ref{eq:zero-lim}), by Theorem~\ref{thm:FI} we have%
\[
\left\langle \sigma^{A},f-\sigma^{A}\right\rangle _{0}=0,
\]
which implies the \emph{first integral relation}%
\begin{equation}
\left\Vert f\right\Vert _{0}^{2}=\left\Vert \sigma^{A}\right\Vert _{0}%
^{2}+\left\Vert f-\sigma^{A}\right\Vert _{0}^{2}. \label{eq:pythagoras}%
\end{equation}
Therefore $\left\Vert f\right\Vert _{0}\geq\left\Vert \sigma^{A}\right\Vert
_{0}$, with equality only if $\left\Vert f-\sigma^{A}\right\Vert _{0}=0$,
which is equivalent to $\left(  f-\sigma^{A}\right)  ^{\prime}\left(
r\right)  =0$, for all $r\in\left(  0,\infty\right)  $. Since $f-\sigma^{A}$
takes zero values at the knots $r_{1}$,\ldots, $r_{n}$, this implies
$f\equiv\sigma^{A}$, which proves the first part of the theorem. The second
part follows similarly, based on the corresponding relation
(\ref{eq:pythagoras}) with $\sigma^{B}$ replacing $\sigma^{A}$.\medskip
\end{proof}

\begin{remark}
\emph{Although the related Beppo Levi }$L$\emph{-splines studied in
\cite{L-spl} employ mutually adjoint boundary operators on the extreme
subintervals }$\left(  0,r_{1}\right)  $\emph{ and }$\left(  r_{n}%
,\infty\right)  $\emph{, it can be verified that our non-singular full-space
minimizer }$\sigma^{B}$\emph{ does not share this property. Note that adjoint
boundary conditions were first identified in the context of univariate
interpolation by full-space Mat\'{e}rn kernels \cite{CA11,IMA13} (for a recent
treatment of Mat\'{e}rn kernels on a compact interval, see \cite{CFM14}).}
\end{remark}

\subsection{The limit of Rabut's minimizers on compact intervals}

As stated in the Introduction, for $0<R_{1}<R_{2}<\infty$, Rabut \cite{Rab96} 
used reproducing kernel theory to study the solution $s_{R_{1},R_{2}}$ 
of the problem of minimizing 
${\textstyle\int_{R_{1}}^{R_{2}}}
(\,r\left\vert f^{\prime\prime}\right\vert ^{2}+\tfrac{1}{r}\left\vert
f^{\prime}\right\vert ^{2})\,\mathrm{d}r$ 
in place of (\ref{eq:semin0}),
subject to prescribed values $\nu_{1}$, \ldots, $\nu_{n}$ of $f$ at 
given points $r_{1}$, \ldots, $r_{n}$ of the
compact interval $\left[  R_{1},R_{2}\right]$.

\begin{proposition}
\label{prop}
Let $\nu_{1}$, \ldots, $\nu_{n}$ be arbitrary real values
and $\sigma^{B}$ the non-singular Beppo Levi $L_{0}$-spline interpolant 
to these values at $r_{1}$, \ldots, $r_{n}$, as in Theorem~\ref{thm:EU}. 
Then $\sigma^{B}$ coincides with the pointwise limit, as $R_{1}\rightarrow0$ and
$R_{2}\rightarrow\infty$, of Rabut's interpolant $s_{R_{1},R_{2}}$ to the same data.
\end{proposition}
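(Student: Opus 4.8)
The plan is to identify Rabut's minimizer $s_{R_{1},R_{2}}$ as the \emph{natural} $L_{0}$-spline on $\rho$ relative to the compact interval $\left[  R_{1},R_{2}\right]  $, and then to show that its defining linear system degenerates, as $R_{1}\rightarrow0$ and $R_{2}\rightarrow\infty$, into the system characterizing $\sigma^{B}$. First I would recover the structure of $s_{R_{1},R_{2}}$ by a standard calculus of variations argument. Since $s_{R_{1},R_{2}}$ minimizes $\int_{R_{1}}^{R_{2}}(r\left\vert f^{\prime\prime}\right\vert ^{2}+\frac{1}{r}\left\vert f^{\prime}\right\vert ^{2})\,\mathrm{d}r$ over all finite-energy interpolants of the data, the first variation $\left\langle s_{R_{1},R_{2}},\psi\right\rangle _{0}=0$ must hold for every admissible $\psi$ on $\left[  R_{1},R_{2}\right]  $ with $\psi\left(  r_{j}\right)  =0$. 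Repeating the integration by parts of the proof of Theorem~\ref{thm:FI} on $\left[  R_{1},R_{2}\right]  $, and using that $r_{1},\ldots,r_{n}$ are \emph{interior} knots (so that $\psi$ and $\psi^{\prime}$ are free at $R_{1}$ and $R_{2}$), this yields that $s_{R_{1},R_{2}}\in\mathrm{Ker}\,L_{0}$ on each subinterval, is $C^{2}$ at each $r_{j}$, and satisfies the four natural endpoint conditions
\[
s_{R_{1},R_{2}}^{\prime\prime}\left(  R_{1}\right)  =s_{R_{1},R_{2}}
^{\prime\prime}\left(  R_{2}\right)  =0,\qquad b_{0}=b_{n}=0,
\]
where $b_{0}$ and $b_{n}$ denote the constant values of $\left(  rf^{\prime\prime}\right)  ^{\prime}-\frac{1}{r}f^{\prime}$ on $\left(  R_{1},r_{1}\right)  $ and on $\left(  r_{n},R_{2}\right)  $, respectively.

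The second step exploits the explicit meaning of these conditions on the two extreme subintervals. A direct computation on the basis $\left\{  r^{2},r^{2}\ln r,1,\ln r\right\}  $ of $\mathrm{Ker}\,L_{0}$ shows that $\left(  rf^{\prime\prime}\right)  ^{\prime}-\frac{1}{r}f^{\prime}$ equals four times the coefficient of $r^{2}\ln r$; hence $b_{0}=b_{n}=0$ force $s_{R_{1},R_{2}}\in\mathrm{span}\left\{  r^{2},1,\ln r\right\}  $ on both $\left(  R_{1},r_{1}\right)  $ and $\left(  r_{n},R_{2}\right)  $. Writing $s_{R_{1},R_{2}}=a_{0}r^{2}+c_{0}+d_{0}\ln r$ on $\left(  R_{1},r_{1}\right)  $, the condition $s^{\prime\prime}\left(  R_{1}\right)  =0$ gives $d_{0}=2a_{0}R_{1}^{2}$; writing $s_{R_{1},R_{2}}=a_{n}r^{2}+c_{n}+d_{n}\ln r$ on $\left(  r_{n},R_{2}\right)  $, the condition $s^{\prime\prime}\left(  R_{2}\right)  =0$ gives $a_{n}=d_{n}/(2R_{2}^{2})$. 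Thus, for every finite $R_{1},R_{2}$ with $R_{1}<r_{1}$ and $r_{n}<R_{2}$, the minimizer is determined by the $4\left(  n+1\right)  $ coefficients of its pieces, subject to the $n$ interpolation conditions, the $3n$ continuity conditions at the knots, and these four endpoint relations --- a square linear system whose entries depend continuously on $\left(  R_{1},R_{2}\right)  $.

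The heart of the argument is the limit of this system. I would track the Hermite data $\left(  s,s^{\prime},s^{\prime\prime}\right)  $ that the extreme pieces feed into the matching conditions at $r_{1}$ and $r_{n}$. Substituting $d_{0}=2a_{0}R_{1}^{2}$ into $s\left(  r_{1}\right)  ,s^{\prime}\left(  r_{1}\right)  ,s^{\prime\prime}\left(  r_{1}\right)  $ and letting $R_{1}\rightarrow0$, these converge to the values at $r_{1}$ of the \emph{non-singular} form $a_{0}r^{2}+c_{0}$; substituting $a_{n}=d_{n}/(2R_{2}^{2})$ into $s\left(  r_{n}\right)  ,s^{\prime}\left(  r_{n}\right)  ,s^{\prime\prime}\left(  r_{n}\right)  $ and letting $R_{2}\rightarrow\infty$, these converge to the values at $r_{n}$ of the form $c_{n}+d_{n}\ln r$. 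Consequently the $\left(  R_{1},R_{2}\right)  $-dependent system converges entrywise to precisely the linear system characterizing the non-singular Beppo Levi $L_{0}$-spline $\sigma^{B}$ of Theorem~\ref{thm:EU}, namely $\sigma^{B}\in\mathrm{span}\left\{  r^{2},1\right\}  $ on $\left(  0,r_{1}\right)  $, $\sigma^{B}\in\mathrm{span}\left\{  1,\ln r\right\}  $ on $\left(  r_{n},\infty\right)  $, $L_{0}\sigma^{B}=0$ on the interior intervals, $C^{2}$-continuity at the knots, and interpolation of the $\nu_{j}$.

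Finally, since Theorem~\ref{thm:EU}(b) guarantees that the limiting system has a unique solution, its coefficient matrix is nonsingular; by continuity of matrix inversion, the solution of the $\left(  R_{1},R_{2}\right)  $-system converges to that of the limiting system as $R_{1}\rightarrow0$ and $R_{2}\rightarrow\infty$. Because for each fixed $r>0$ the point $r$ eventually lies in a fixed subinterval (once $R_{1}<r<R_{2}$), convergence of all piece coefficients yields $s_{R_{1},R_{2}}\left(  r\right)  \rightarrow\sigma^{B}\left(  r\right)  $ pointwise, as claimed. I expect the main obstacle to be exactly these last two steps: one must ensure that the coefficients remain bounded and converge rather than blow up, and this is precisely where the nonsingularity of the limiting coefficient matrix --- supplied by the uniqueness in Theorem~\ref{thm:EU} --- is essential. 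A careful but routine continuity estimate for the solution of a linear system with convergent, invertible-in-the-limit coefficient matrix then closes the argument.
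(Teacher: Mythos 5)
Your argument is correct in outline, but it takes a genuinely different route from the paper. The paper never re-derives the structure of Rabut's minimizer: it starts from Rabut's explicit reproducing-kernel representation $s_{R_{1},R_{2}}(r)=\nu_{1}+\sum_{j=2}^{n}\lambda_{j}H_{R_{1},R_{2}}(r,r_{j})$, observes that the kernel $H_{R_{1},R_{2}}$ converges pointwise to an explicit limit $H_{0,\infty}$, and reduces everything to showing that the limiting $(n-1)\times(n-1)$ collocation matrix $\bigl(H_{0,\infty}(r_{k},r_{j})\bigr)$ is nonsingular --- which it does by checking that any function $\sum c_{j}H_{0,\infty}(\cdot,r_{j})$ vanishing at the knots is a non-singular Beppo Levi $L_{0}$-spline and invoking Theorem~\ref{thm:EU}(b). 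You instead work with the full piecewise-coefficient system ($4n$ unknowns after eliminating $d_{0}$ and $a_{n}$) and pass to the limit there. Both proofs ultimately rest on the same pivot, namely that uniqueness in Theorem~\ref{thm:EU}(b) forces the limiting matrix to be invertible, after which continuity of matrix inversion gives convergence of the coefficients. What the paper's route buys is a much smaller matrix and no need to justify the Euler--Lagrange structure of $s_{R_{1},R_{2}}$; what your route buys is independence from Rabut's closed-form kernel and a transparent explanation of \emph{why} the non-singularity condition and the $\ln r$ tail emerge in the limit (the relations $d_{0}=2a_{0}R_{1}^{2}$ and $a_{n}=d_{n}/(2R_{2}^{2})$ degenerating to $d_{0}=0$ and $a_{n}=0$), which is exactly the phenomenon the paper only comments on in the remark following the proposition.

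Two points in your write-up need care before it is a complete proof. First, the claim that the minimizer on $[R_{1},R_{2}]$ is piecewise in $\mathrm{Ker}\,L_{0}$, is $C^{2}$ at the knots, and satisfies the four natural end conditions is not automatic from setting the first variation to zero: a priori the minimizer only has the energy-space regularity, so you either need a du Bois-Reymond type bootstrapping argument, or (much cheaper) you should read this structure directly off Rabut's explicit formulas, as the paper's remark does via \cite[(3.4)]{Rab96}. Second, the end condition at $R_{1}$ in its raw form $2a_{0}-d_{0}/R_{1}^{2}=0$ has an entry that blows up as $R_{1}\rightarrow0$, so the ``entrywise convergence of the system'' must be stated for the normalized/eliminated form you actually use; your substitution handles this, but the claim that the \emph{original} $4(n+1)\times4(n+1)$ system has continuously varying entries down to the limit is false as stated and should be replaced by the reduced $4n\times4n$ system throughout.
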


\begin{proof}
Let $H_{R_{1},R_{2}}$ denote the reproducing kernel \cite[(3.2)]{Rab96}
\begin{eqnarray*}
H_{R_{1},R_{2}} (r,t) & = &  K(r,t) - K(r,r_1) - K(r_1,t) \\
& & \mbox{} + \frac{1}{8(R_2^2 - R_1^2)}
\left( r^2 - r_1^2 + 2 R_1^2 \ln \frac{r}{r_1} \right)
\left( t^2 - r_1^2 + 2 R_2^2 \ln \frac{t}{r_1} \right),
\end{eqnarray*}
where $K$ is defined by \cite[(3.1)]{Rab96}
\begin{equation}
K(r,t) = \frac{1}{4} \left\{
\begin{array}[c]{ll}
0, & \text{if }r\leq t, \\
t^2 - r^2 + ( r^2 + t^2 ) \ln \frac{r}{t}, & \text{if }r>t. 
\end{array}
\right.
\label{eq:rk-truncated}
\end{equation}
Then the minimizer $s_{R_{1},R_{2}}$ can be expressed as \cite[(2.9)]{Rab96}
\begin{equation}
s_{R_{1},R_{2}}(r) = \nu_{1} + \sum_{j=2}^n \lambda_j H_{R_{1},R_{2}}(r,r_j),
\quad R_1 \leq r \leq R_2,
\label{eq:rk-rep}
\end{equation}
for some real coefficients $\lambda_{2}$, \ldots, $\lambda_{n}$. Note that
$ H_{R_{1},R_{2}} (r_1,t) = 0 $ implies $ s_{R_{1},R_{2}} (r_1) = \nu_1 $.
Therefore the coefficients of (\ref{eq:rk-rep}) satisfy the system
\begin{equation}
\sum_{j=2}^n \lambda_j H_{R_{1},R_{2}}(r_k,r_j) = \nu_k - \nu_1 ,
\quad k=2,\ldots,n.
\label{eq:rk-sys}
\end{equation}
Since, by \cite[Theorem 6(ii)]{Rab96}, this system admits a solution 
for arbitrary right-hand side data, it follows that its matrix is invertible. Let
\begin{equation*}
d(R_1,R_2) := \det \left( H_{R_{1},R_{2}}(r_k,r_j) \right)_{k,j=2}^n \not = 0,
\end{equation*}
and, for $j=2,\ldots,n$, denote by $d_j(R_1,R_2)$ the determinant obtained by 
replacing the $j$-th column of the matrix of (\ref{eq:rk-sys}) by the vector of 
right-hand side values of (\ref{eq:rk-sys}). Then, by Cramer's rule, we have
\begin{equation}
\lambda_j = \frac {d_j(R_1,R_2)} {d(R_1,R_2)} ,
\quad j=2,\ldots,n.
\label{eq:cramer}
\end{equation}

Let $ H_{0,\infty} $ the pointwise limit of $ H_{R_{1},R_{2}} $ as 
$R_{1}\rightarrow0$ and $R_{2}\rightarrow\infty$, hence 
\begin{equation}
H_{0,\infty} (r,t) =  K(r,t) - K(r,r_1) - K(r_1,t) + \frac{1}{4} (r^2 - r_1^2)
\ln \frac{t}{r_1} ,
\label{eq:rk-lim}
\end{equation}
and let $d(0,\infty)$ and $d_j(0,\infty)$ the corresponding limit values of 
$d(R_1,R_2)$ and $d_j(R_1,R_2)$. In order to obtain the pointwise limit of 
expression (\ref{eq:rk-rep}), we need to establish the existence of 
limit values for each of the coefficients $\lambda_{2}$, \ldots, $\lambda_{n}$. 
This amounts, via (\ref{eq:cramer}), to showing that $d(0,\infty) \not = 0$, 
\emph{i.e.}\ the limit system obtained by replacing 
$ H_{R_{1},R_{2}} $ by $ H_{0,\infty} $ in (\ref{eq:rk-sys}) has a 
non-singular matrix.

To prove the latter claim, assume that $\nu_k = 0 $, $k=1,\ldots,n$, so the 
above limit system becomes homogeneous, and denote by $c_{2}$, \ldots, 
$c_{n}$, an arbitrary solution of this homogeneous system. Letting 
\begin{equation}
s(r) := \sum_{j=2}^n c_j H_{0,\infty}(r,r_j),
\quad 0 <  r ,
\label{eq:rk-rep-hom}
\end{equation}
it follows that 
\begin{equation*}
s(r_k) = 0, \quad k=2,\ldots,n,
\end{equation*}
while $s(r_1) = 0$ since $ H_{0,\infty}(r_1,t) = 0 $. Also, (\ref{eq:rk-lim}) 
and (\ref{eq:rk-rep-hom}) imply 
\begin{equation}
s(r) = a r^2 + b + \sum_{j=1}^n c_j K(r,r_j),
\quad 0 <  r ,
\label{eq:rk-rep-tru}
\end{equation}
where $c_1 := - \sum_{j=2}^n c_j$, 
$a := \frac{1}{4} \sum_{j=2}^n c_j \ln \frac{r_j}{r_1}$, 
and $b := - \frac{r_1^2}{4} \sum_{j=2}^n c_j \ln \frac{r_j}{r_1}$. Since, 
for each fixed $t$, $K(r,t)$ is a $C^2$-continuous function of $r$, it is 
straightforward to verify that 
$s$ is in fact a non-singular Beppo Levi $L_{0}$-spline in the
sense of our Definition~\ref{def:natLspline}. Invoking 
Theorem~\ref{thm:EU}, part (b), it follows that $s$ vanishes identically. 
Therefore, using (\ref{eq:rk-truncated}) and (\ref{eq:rk-rep-tru}) to 
obtain the form of $s$ successively on each interval $(0,r_1)$, $(r_1,r_2)$, 
\ldots, $(r_n, \infty)$, we deduce $c_j = 0 $, $j=2,\ldots,n$, hence 
$d(0,\infty) \not = 0$, as claimed.

Letting $\widehat{\lambda}_j := d_j(0,\infty) / d(0,\infty)$, $j=2,\ldots,n$, 
and defining
\begin{equation*}
s_{0,\infty}(r) := \nu_{1} + \sum_{j=2}^n 
\widehat{\lambda}_j H_{0,\infty}(r,r_j),
\quad 0 < r ,
\end{equation*}
it follows that $s_{0,\infty}$ is the pointwise limit of $ s_{R_{1},R_{2}}$.
Further, $s_{0,\infty} (r_k) = \nu_k$, $k=1,\ldots,n$, and  $s_{0,\infty}$ 
admits a representation similar to (\ref{eq:rk-rep-tru}), hence 
$s_{0,\infty} \in \mathcal{S}_0 (\rho)$. Therefore 
Theorem~\ref{thm:EU}, part (b), implies $s_{0,\infty}\equiv\sigma^{B}$.
\end{proof}

\begin{remark}
\emph{Unlike }$\sigma^{B}$\emph{, Rabut's minimizer }$s_{R_{1},R_{2}}$\emph{
satisfies the `natural' boundary conditions from classical calculus of
variations. Indeed, expression (\ref{eq:rk-rep}) and the last two lines of
\cite[(3.4)]{Rab96} show that the end conditions for }$s_{R_{1},R_{2}}$\emph{
are formulated by means of identical differential operators at both endpoints
}$R_{1}$\emph{ and }$R_{2}$\emph{, in agreement with the `natural'/`type II'
conditions of }$L$\emph{-spline literature \cite{L70} or \cite[Theorem
4]{JP72}.}
\end{remark}

\subsection{Linear representation with dilates of a basis function}

We now obtain a representation of the two types of interpolatory Beppo Levi
$L_{0}$-splines in terms of the following basis function:%
\begin{equation}
\varphi_{0}\left(  r\right)  :=\left\{
\begin{array}
[c]{l}%
r^{2}-r^{2}\ln r,\quad0\leq r\leq1,\\
1+\ln r,\quad1<r.
\end{array}
\right.  \label{eq:rbf-ann0}%
\end{equation}
Note that $\varphi_{0}$ is $C^{2}$-continuous at $r=1$, $G_{0}\varphi
_{0}\left(  r\right)  =0$ for $0<r<1$, $R_{0}\varphi_{0}\left(  r\right)  =0$
for $1<r$, and $\varphi_{0}\left(  0\right)  =0$.

\begin{figure}
\centering
\includegraphics[height=2.0in,width=3.0in]{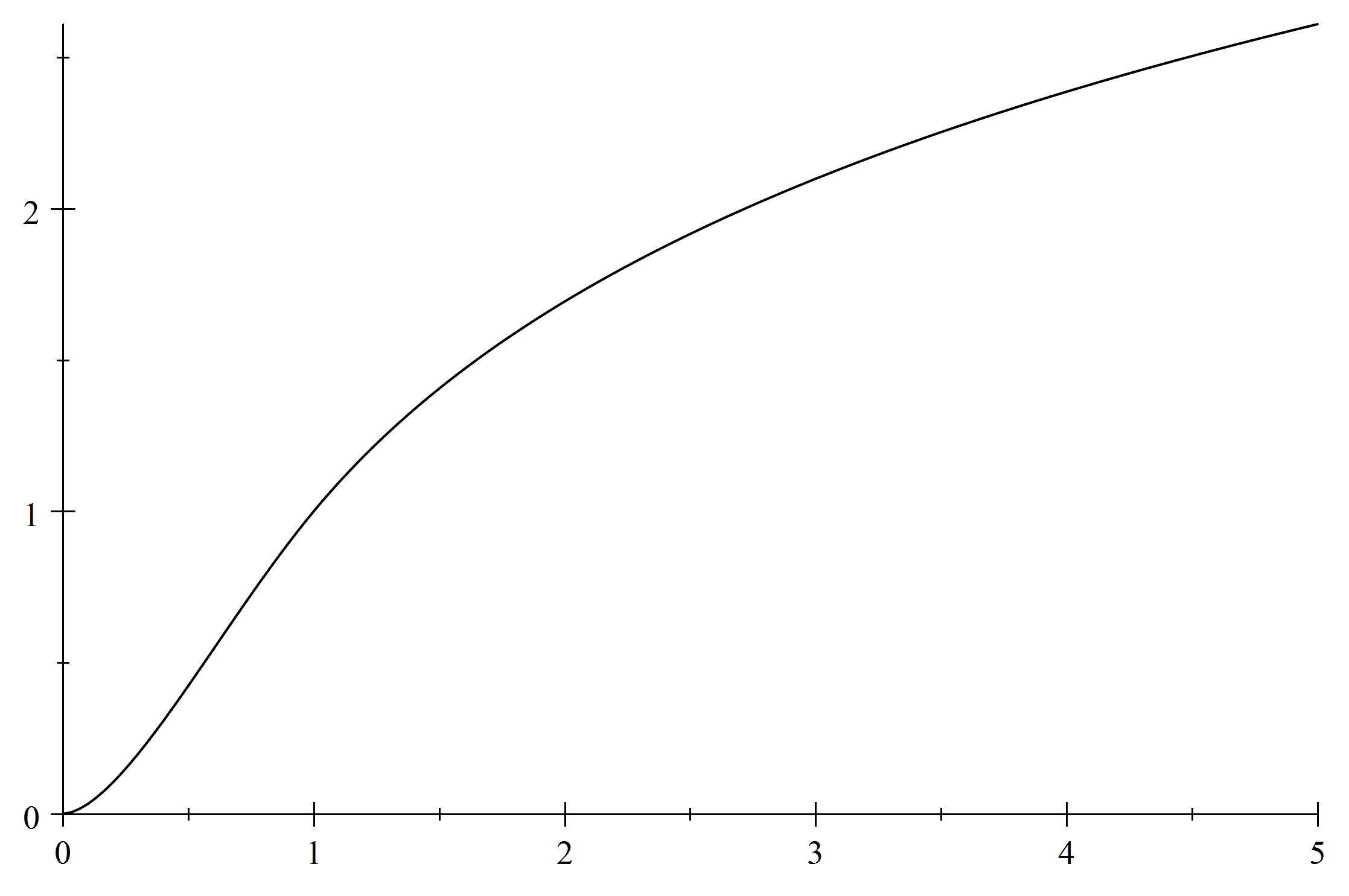}
\caption{Plot of the basis function $\varphi_{0}$.}
\label{phinot}
\end{figure}

The expression stated in the next theorem will be used in section~\ref{sec5}
to evaluate our Beppo Levi $L_{0}$-spline interpolants on a fine mesh and
estimate the numerical accuracy of this approximation procedure. Similar
representations for the related family of $L$-splines \cite[Lemma 3]{MMA12} have
played a crucial role in the construction of Beppo Levi polyspline surfaces
which interpolate smooth curves prescribed on concentric circles.

\begin{theorem}
Let $\sigma$ be one of the Beppo Levi $L_{0}$-splines $\sigma^{A}$ or
$\sigma^{B}$ satisfying the interpolation conditions \emph{(\ref{eq:typeA})}
or \emph{(\ref{eq:typeB})} of Theorem~\ref{thm:EU} for given values $\alpha$,
$\nu_{1}$, \ldots, $\nu_{n}$. There exist unique coefficients $c$, $a_{1}$,
\ldots, $a_{n}$, such that, for all $r\geq0$, we have the representation%
\begin{equation}
\sigma\left(  r\right)  =c+\sum\limits_{k=1}^{n}a_{k}\varphi_{0}\left(
\frac{r}{r_{k}}\right)  , \label{eq:rbf-k0}%
\end{equation}
where, if $\sigma=\sigma^{B}$, the coefficients also satisfy:%
\begin{equation}
\sum_{k=1}^{n}a_{k}r_{k}^{-2}=0. \label{eq:side-B}%
\end{equation}

\end{theorem}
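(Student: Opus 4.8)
The plan is to view the representation (\ref{eq:rbf-k0}) as the image of the coefficient vector $(c,a_1,\dots,a_n)$ under the linear map $\Phi\colon(c,a_1,\dots,a_n)\mapsto \Sigma:=c+\sum_{k=1}^{n}a_k\varphi_0(\cdot/r_k)$ into $\mathcal{S}_0(\rho)$, to show that $\Phi$ is a bijection, and then to read off the side condition (\ref{eq:side-B}) from the non-singularity clause of Definition~\ref{def:natLspline}(b). First I would verify that $\Phi$ is well defined, i.e.\ that $\Sigma\in\mathcal{S}_0(\rho)$ for every choice of coefficients. From the stated properties of $\varphi_0$ in (\ref{eq:rbf-ann0}) together with a change of scale, each dilate $\varphi_0(\cdot/r_k)$ lies in $\mathrm{span}\{r^2,r^2\ln r\}$ on $(0,r_k)$, in $\mathrm{span}\{1,\ln r\}$ on $(r_k,\infty)$, and is $C^2$ across its own knot $r_k$ (since $\varphi_0$ is $C^2$ at $1$). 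Summing, $\Sigma$ lies in $\mathrm{span}\{1,r^2,r^2\ln r\}=\mathrm{Ker}\,G_0$ on $(0,r_1)$, in $\mathrm{Ker}\,L_0$ on each interior interval, and in $\mathrm{span}\{1,\ln r\}=\mathrm{Ker}\,R_0$ on $(r_n,\infty)$, and is $C^2$ at every knot; moreover these spans guarantee finite energy, so $\Sigma\in\Lambda_0$. Hence $\Sigma\in\mathcal{S}_0(\rho)$.

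Next I would read off the non-singularity condition. Expanding $\varphi_0(r/r_k)=r_k^{-2}r^2(1+\ln r_k-\ln r)$ on $(0,r_1)$ and collecting terms, the coefficient of $r^2\ln r$ in $\Sigma$ on $(0,r_1)$ equals $-\sum_{k=1}^{n}a_k r_k^{-2}$; also $\varphi_0(0)=0$ gives $\Sigma(0)=c$. Consequently $\Sigma$ is non-singular, i.e.\ $\Sigma\in\mathrm{span}\{r^2,1\}$ on $(0,r_1)$, precisely when (\ref{eq:side-B}) holds.

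The crux is to prove that $\Phi$ is a bijection, which I would obtain from injectivity together with a dimension count. For injectivity, suppose $\Sigma\equiv0$. The key calculation is that $\varphi_0'''$ has a nonzero jump at $r=1$: a short computation gives $\varphi_0'''(1^-)=-2$ and $\varphi_0'''(1^+)=2$, a jump of $4$. Hence the dilate $\varphi_0(\cdot/r_k)$ has a third-derivative jump of $4r_k^{-3}\neq0$ at $r_k$, whereas every other summand $a_j\varphi_0(\cdot/r_j)$ ($j\neq k$) and the constant $c$ are smooth near $r_k$; reading off the jump of $\Sigma'''$ at each knot $r_j$ forces $a_j=0$ for all $j$, and then $c=\Sigma(0)=0$. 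For the dimension, the evaluation map $\eta\mapsto(\eta(0),\eta(r_1),\dots,\eta(r_n))$ is an isomorphism of $\mathcal{S}_0(\rho)$ onto $\mathbb{R}^{n+1}$: it is onto by Theorem~\ref{thm:EU}(a), and one-to-one because an $\eta\in\mathcal{S}_0(\rho)$ with zero values satisfies (\ref{eq:typeA}) with $\alpha=0$, whence $\eta\equiv0$ by the uniqueness in Theorem~\ref{thm:EU}(a). Thus $\dim\mathcal{S}_0(\rho)=n+1$ equals the dimension of the coefficient space, so the injective map $\Phi$ is bijective.

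Finally I would conclude. Since $\sigma^A,\sigma^B\in\mathcal{S}_0(\rho)$ and $\Phi$ is a bijection, each admits a unique coefficient vector $(c,a_1,\dots,a_n)$ realizing (\ref{eq:rbf-k0}), which settles existence and uniqueness in both cases; the reality of the data and of $\varphi_0$ makes these coefficients real. For $\sigma=\sigma^B$, which is non-singular by construction, the equivalence established in the second step forces its coefficients to satisfy (\ref{eq:side-B}). I expect the main obstacle to be organizing the bijectivity argument cleanly: verifying the nonzero third-derivative jump of $\varphi_0$ that drives injectivity, and correctly isolating the $r^2\ln r$ coefficient on $(0,r_1)$ that pins down the side condition (\ref{eq:side-B}).
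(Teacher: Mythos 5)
Your argument is correct, and it reaches the conclusion by a route that is organized differently from the paper's. The paper also begins by checking that the right-hand side of (\ref{eq:rbf-k0}) defines a member of $\mathcal{S}_{0}\left(\rho\right)$ and that (\ref{eq:side-B}) is exactly the non-singularity condition, but it then passes directly to the square linear system formed by the interpolation conditions (\ref{eq:typeA}), respectively (\ref{eq:typeB}) together with (\ref{eq:side-B}), and shows this system is nonsingular: with zero data, Theorem~\ref{thm:EU} forces the combination to vanish identically, after which the coefficients are recovered one at a time from the explicit piecewise forms ($c$ from the constant term on $\left(0,r_{1}\right)$, then $a_{1}$ as the coefficient of $\ln r$ on $\left(r_{1},r_{2}\right)$, then $a_{1}+a_{2}$ on the next interval, and so on). You instead prove outright that $\left\{1,\varphi_{0}\left(\cdot/r_{1}\right),\ldots,\varphi_{0}\left(\cdot/r_{n}\right)\right\}$ is linearly independent --- via the third-derivative jump $4r_{k}^{-3}$ of $\varphi_{0}\left(\cdot/r_{k}\right)$ at $r_{k}$, which checks out ($\varphi_{0}'''\left(1^{-}\right)=-2$, $\varphi_{0}'''\left(1^{+}\right)=2$) --- and combine this with the observation that evaluation at $0,r_{1},\ldots,r_{n}$ is an isomorphism of $\mathcal{S}_{0}\left(\rho\right)$ onto the $(n+1)$-dimensional data space, both directions supplied by Theorem~\ref{thm:EU}(a). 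Your version buys the slightly stronger and reusable fact that these $n+1$ functions form a basis of all of $\mathcal{S}_{0}\left(\rho\right)$, at the cost of an extra (easy) dimension count; the paper's version stays closer to the interpolation systems actually solved numerically in section~5 and avoids differentiating three times, using only the expansion of each piece in the basis (\ref{eq:ker}). Either linear-independence mechanism would work inside either organization, so the two proofs are interchangeable in substance.
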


\begin{proof}
Letting $\sigma_{1}\left(  r\right)  $ denote the right-hand side of
(\ref{eq:rbf-k0}), the noted properties of $\varphi_{0}$ imply $\sigma_{1}%
\in\mathcal{S}_{0}\left(  \rho\right)  $ and $c=\sigma_{1}\left(  0\right)  $.
Also, $\sigma_{1}\ $is non-singular if and only if (\ref{eq:side-B}) holds.
Therefore, invoking Theorem~\ref{thm:EU}, we have to prove the existence of
unique coefficients such that either $\sigma_{1}$ satisfies (\ref{eq:typeA})
in place of $\sigma^{A}$, or (\ref{eq:typeB}) and (\ref{eq:side-B}) hold with
$\sigma_{1}$ in place of $\sigma^{B}$. Since these equations are linear, it is
sufficient to assume that all data values $\alpha$, $\nu_{1}$, \ldots,
$\nu_{n}$ are zero and prove that each of the two resulting homogeneous
systems admits only the trivial solution.

In each of the two cases,
from $\sigma_{1}\in\mathcal{S}_{0}\left(  \rho\right)$
and Theorem~\ref{thm:EU}, part (b), it follows that $\sigma_{1}$ 
is identically zero on each subinterval. Since
the form of $\sigma_{1}$ on the subinterval $(0,r_1)$ is
\[ 
\sigma_{1}\left(  r\right)  = c + r^{2}\sum\limits_{k=1}^{n} 
\frac{ a_{k}}{r_{k}^{2}} (1+\ln r_{k}) 
-  r^{2}\ln r \sum\limits_{k=1}^{n} \frac{ a_{k}}{r_{k}^{2}},
\quad r\in\left(  0,r_{1}\right) ,
\]
we obtain, in particular, $c=0$. Also, since $a_1$ is the coefficient of $\ln r$
in the expression of $\sigma_{1}$ in terms of the basis (\ref{eq:ker}) 
on the subinterval $(r_1,r_2)$, we obtain $a_1 = 0$, and applying a similar 
argument on successive subintervals it follows that $a_2 = \ldots = a_n = 0$. 
This establishes the theorem.
\end{proof}

\begin{remark}
\emph{Although each dilation }$\cdot/r_{k}$\emph{ can be regarded as a
`translation' in the multiplicative topological group }$\left(  0,\infty
\right)  $\emph{, our representation (\ref{eq:rbf-k0}) does not belong to the
framework of interpolation via Hankel translates recently considered by
Arteaga \& Marrero \cite{AM12}.}
\end{remark}

\section{Convergence orders}

In this section, we establish $L^{p}$-error bounds for the interpolatory
profiles studied in the previous section. The method of our proofs is based on
the error analysis \cite{ANW67,SV67} developed for generalized splines and $L$-splines.

\begin{theorem}
\label{thm:Linfty}For $n\geq2$ and $r_{0}=0<r_{1}<\ldots<r_{n}<\infty=r_{n+1}%
$, let $\rho:=\left\{  r_{1},\ldots,r_{n}\right\}  $ and $h:=\max_{1\leq j\leq
n-1}\left(  r_{j+1}-r_{j}\right)  $. Given $f\in\Lambda_{0}$, let
$\alpha:=f\left(  0\right)  $ and $\nu_{j}:=f\left(  r_{j}\right)  $ for
$1\leq j\leq n$. If $\sigma$ denotes one of the corresponding Beppo Levi
$L_{0}$-spline interpolants $\sigma^{A}$ or $\sigma^{B}$ obtained in
Theorem~\ref{thm:EU}, then, for $l\in\left\{  0,1\right\}  $,%
\begin{equation}
\left\Vert f^{\left(  l\right)  }-\sigma^{\left(  l\right)  }\right\Vert
_{L^{\infty}\left[  r_{1},r_{n}\right]  }\leq\frac{1}{2^{1-l}\sqrt{r_{1}}%
}h^{3/2-l}\left\Vert f\right\Vert _{0}. \label{eq:Linfty-err}%
\end{equation}

\end{theorem}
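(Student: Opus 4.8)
The plan is to estimate the error function $e := f - \sigma$ directly, exploiting that it vanishes at all the knots $r_1, \ldots, r_n$ and is controlled in the energy seminorm. First I would note that $e \in \Lambda_0$ and $e(r_j) = 0$ for $1 \le j \le n$. Since $\alpha = f(0)$ and $\nu_j = f(r_j)$ by hypothesis, the function $f$ satisfies exactly the interpolation conditions (\ref{eq:typeA}) (when $\sigma = \sigma^A$) or (\ref{eq:typeB}) (when $\sigma = \sigma^B$) that characterize $\sigma$. Consequently the first integral relation (\ref{eq:pythagoras}) established in the proof of Theorem~\ref{thm:var-char} applies in both cases, yielding
\[
\left\Vert f\right\Vert_0^2 = \left\Vert \sigma\right\Vert_0^2 + \left\Vert e\right\Vert_0^2, \qquad \text{hence} \qquad \left\Vert e\right\Vert_0 \leq \left\Vert f\right\Vert_0 .
\]
This single inequality unifies the two cases: the remainder of the argument uses only that $e$ vanishes at the interior knots and that $\left\Vert e\right\Vert_0 \leq \left\Vert f\right\Vert_0$.

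For the derivative estimate ($l=1$), I would fix a subinterval $[r_k, r_{k+1}]$ with $k \in \{1,\ldots,n-1\}$. Since $e(r_k) = e(r_{k+1}) = 0$, Rolle's theorem furnishes a point $\xi_k \in (r_k, r_{k+1})$ with $e'(\xi_k) = 0$. For any $r \in [r_k, r_{k+1}]$ I would write $e'(r) = \int_{\xi_k}^r e''(t)\,\mathrm{d}t$ and apply Cauchy--Schwarz, bounding $\int_{\xi_k}^r |e''(t)|^2\,\mathrm{d}t \leq r_1^{-1}\int_{r_1}^{r_n} t\,|e''(t)|^2\,\mathrm{d}t \leq r_1^{-1}\left\Vert e\right\Vert_0^2$ (using $t \geq r_1$ throughout $[r_1,r_n]$, so that the $r$-weight of the seminorm (\ref{eq:semin0}) is recovered) together with $\int_{\xi_k}^r \mathrm{d}t = |r - \xi_k| \leq h$. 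This gives $|e'(r)| \leq r_1^{-1/2}\,h^{1/2}\left\Vert e\right\Vert_0 \leq r_1^{-1/2}\,h^{1/2}\left\Vert f\right\Vert_0$, which is precisely (\ref{eq:Linfty-err}) for $l=1$.

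For the function estimate ($l=0$), I would again work on $[r_k, r_{k+1}]$ and integrate the derivative bound from the \emph{nearer} endpoint: for $r$ in the left half of the subinterval use $e(r_k)=0$, and for $r$ in the right half use $e(r_{k+1})=0$, so that $|e(r)| \leq \min(r - r_k,\, r_{k+1} - r)\,\left\Vert e'\right\Vert_{L^\infty[r_k,r_{k+1}]} \leq (h/2)\,\left\Vert e'\right\Vert_{L^\infty}$. Combining this with the already-proved $l=1$ bound produces the extra factor $h/2$ and the constant $\tfrac12$, which is (\ref{eq:Linfty-err}) for $l=0$.

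The computations are all elementary, and I do not anticipate a genuine obstacle. The only point demanding care is the choice of weights in the Cauchy--Schwarz step, so that the seminorm $\left\Vert e\right\Vert_0$ is recovered with the sharp constant $r_1^{-1/2}$; everything else reduces to the two endpoint-vanishing arguments above. The one conceptual step worth emphasizing is the verification that the first integral relation is simultaneously available for $\sigma^A$ and $\sigma^B$, since it is exactly this that lets both interpolants be treated by a single estimate.
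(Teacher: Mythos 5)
Your proposal is correct and follows essentially the same route as the paper's proof: both hinge on the first integral relation (\ref{eq:pythagoras}) to get $\Vert f-\sigma\Vert_{0}\leq\Vert f\Vert_{0}$, Rolle's theorem on each subinterval, and a weighted Cauchy--Schwarz estimate using $r\geq r_{1}$ to recover the seminorm with the constant $r_{1}^{-1/2}$. The only cosmetic difference is that the paper estimates at the maximizing points $r^{\ast},t^{\ast}$ and their nearest knot or Rolle point, whereas you bound uniformly on each subinterval; the resulting constants are identical.
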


\begin{proof}
Let $\psi:=f-\sigma$, so $\psi\left(  r_{j}\right)  =0$ for $1\leq j\leq n$.
Since $\psi\in C^{1}\left(  0,\infty\right)  $, Rolle's theorem implies that,
for each $j\in\left\{  1,\ldots,n-1\right\}  $, there exists $t_{j}\in\left(
r_{j},r_{j+1}\right)  $, such that%
\[
\psi^{\prime}\left(  t_{j}\right)  =0.
\]
Let $r^{\ast},t^{\ast}\in\left[  r_{1},r_{n}\right]  $ satisfy%
\begin{equation}
\left\vert \psi\left(  r^{\ast}\right)  \right\vert =\left\Vert \psi
\right\Vert _{L^{\infty}\left[  r_{1},r_{n}\right]  },\quad\left\vert
\psi^{\prime}\left(  t^{\ast}\right)  \right\vert =\left\Vert \psi^{\prime
}\right\Vert _{L^{\infty}\left[  r_{1},r_{n}\right]  }, \label{eq:abs-max}%
\end{equation}
and choose $k\in\left\{  1,\ldots,n\right\}  $ and $m\in\left\{
1,\ldots,n-1\right\}  $, such that%
\begin{equation}
\left\vert r^{\ast}-r_{k}\right\vert \leq h/2,\quad\left\vert t^{\ast}%
-t_{m}\right\vert \leq h. \label{eq:closest-knot}%
\end{equation}
Since $\psi^{\prime}$ is locally absolutely continuous, it follows that
$\psi^{\prime\prime}$ is locally integrable and we have the Leibniz-Newton
formulae:%
\begin{equation}%
\begin{array}
[c]{l}%
\psi\left(  r^{\ast}\right)  =\psi\left(  r^{\ast}\right)  -\psi\left(
r_{k}\right)  =\int_{r_{k}}^{r^{\ast}}\psi^{\prime}\left(  r\right)
\mathrm{d}r,\\
\psi^{\prime}\left(  t^{\ast}\right)  =\psi^{\prime}\left(  t^{\ast}\right)
-\psi^{\prime}\left(  t_{m}\right)  =\int_{t_{m}}^{t^{\ast}}\psi^{\prime
\prime}\left(  r\right)  \mathrm{d}r.
\end{array}
\label{eq:LeibNewt}%
\end{equation}
Therefore the first line of (\ref{eq:LeibNewt}) and (\ref{eq:closest-knot})
imply the estimate%
\begin{equation}
\left\vert \psi\left(  r^{\ast}\right)  \right\vert \leq\left\vert \int
_{r_{k}}^{r^{\ast}}\left\vert \psi^{\prime}\left(  r\right)  \right\vert
\mathrm{d}r\right\vert \leq\frac{h}{2}\left\Vert \psi^{\prime}\right\Vert
_{L^{\infty}\left[  r_{1},r_{n}\right]  }, \label{eq:bound1}%
\end{equation}
while, using Cauchy-Schwarz and (\ref{eq:closest-knot}) in the second line of
(\ref{eq:LeibNewt}), we obtain%
\begin{align}
\left\vert \psi^{\prime}\left(  t^{\ast}\right)  \right\vert  &
\leq\left\vert \int_{t_{m}}^{t^{\ast}}\left\vert \psi^{\prime\prime}\left(
r\right)  \right\vert \mathrm{d}r\right\vert =\left\vert \int_{t_{m}}%
^{t^{\ast}}r^{-1/2}\left[  r^{1/2}\left\vert \psi^{\prime\prime}\left(
r\right)  \right\vert \right]  \mathrm{d}r\right\vert \nonumber\\
&  \leq\left\vert \int_{t_{m}}^{t^{\ast}}r^{-1}\mathrm{d}r\right\vert
^{1/2}\left\vert \int_{t_{m}}^{t^{\ast}}r\,\left\vert \psi^{\prime\prime
}\left(  r\right)  \right\vert ^{2}\mathrm{d}r\right\vert ^{1/2}\nonumber\\
&  \leq\left(  \frac{\left\vert t^{\ast}-t_{m}\right\vert }{r_{1}}\right)
^{1/2}\left(  \int_{0}^{\infty}r\,\left\vert \psi^{\prime\prime}\left(
r\right)  \right\vert ^{2}\mathrm{d}r\right)  ^{1/2}\nonumber\\
&  \leq r_{1}^{-1/2}h^{1/2}\left\Vert \psi\right\Vert _{0}. \label{eq:bound2}%
\end{align}

On the other hand, the first integral relation (\ref{eq:pythagoras}), valid
for both $\sigma^{A}$ and $\sigma^{B}$, implies%
\begin{equation}
\left\Vert \psi\right\Vert _{0}=\left\Vert f-\sigma\right\Vert _{0}%
\leq\left\Vert f\right\Vert _{0}. \label{eq:bound3}%
\end{equation}
The conclusion follows from (\ref{eq:abs-max}) and (\ref{eq:bound1}%
)--(\ref{eq:bound3}).
\end{proof}

\begin{theorem}
\label{thm:L2}Under the hypotheses of Theorem~\ref{thm:Linfty}, for
$l\in\left\{  0,1\right\}  $, we have%
\begin{equation}
\left\Vert f^{\left(  l\right)  }-\sigma^{\left(  l\right)  }\right\Vert
_{L^{2}\left[  r_{1},r_{n}\right]  }\leq\frac{1}{2^{1-l}\sqrt{r_{1}}}%
h^{2-l}\left\Vert f\right\Vert _{0}. \label{eq:L2-error}%
\end{equation}

\end{theorem}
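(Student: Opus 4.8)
The plan is to reuse the entire setup from the proof of Theorem~\ref{thm:Linfty}: write $\psi:=f-\sigma$, so that $\psi\left(r_{j}\right)=0$ for $1\le j\le n$, let $t_{j}\in\left(r_{j},r_{j+1}\right)$ be the Rolle points with $\psi^{\prime}\left(t_{j}\right)=0$, and recall the energy bound $\left\Vert \psi\right\Vert_{0}\le\left\Vert f\right\Vert_{0}$ from (\ref{eq:bound3}). Rather than work with sup-norms, I would prove two $L^{2}$ analogues of the pointwise estimates (\ref{eq:bound1})--(\ref{eq:bound2}), namely
\begin{equation*}
\left\Vert \psi^{\prime}\right\Vert_{L^{2}\left[r_{1},r_{n}\right]}\le\frac{h}{\sqrt{r_{1}}}\left\Vert \psi\right\Vert_{0}\quad\text{and}\quad\left\Vert \psi\right\Vert_{L^{2}\left[r_{1},r_{n}\right]}\le\frac{h}{2}\left\Vert \psi^{\prime}\right\Vert_{L^{2}\left[r_{1},r_{n}\right]},
\end{equation*}
and then chain them. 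The first inequality is precisely the case $l=1$, while feeding it into the second yields the case $l=0$, with the constant $\tfrac{1}{2\sqrt{r_{1}}}$ arising exactly from the factor $\tfrac12$ in the Poincar\'{e} step.

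For the first inequality I would localize to a subinterval $\left[r_{j},r_{j+1}\right]$ and use $\psi^{\prime}\left(r\right)=\int_{t_{j}}^{r}\psi^{\prime\prime}\left(s\right)\mathrm{d}s$, splitting $\psi^{\prime\prime}=s^{-1/2}\cdot s^{1/2}\psi^{\prime\prime}$ and applying Cauchy--Schwarz with the weight $s$, exactly as in (\ref{eq:bound2}). Bounding $s^{-1}\le r_{1}^{-1}$ and $\left\vert r-t_{j}\right\vert\le h$ gives the uniform estimate $\left\vert \psi^{\prime}\left(r\right)\right\vert^{2}\le\tfrac{h}{r_{1}}\int_{r_{j}}^{r_{j+1}}s\left\vert \psi^{\prime\prime}\left(s\right)\right\vert^{2}\mathrm{d}s$ over the whole subinterval. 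Integrating in $r$ over $\left[r_{j},r_{j+1}\right]$ (of length $\le h$) and summing over $j$ recombines the weighted second-derivative integrals into $\int_{r_{1}}^{r_{n}}s\left\vert \psi^{\prime\prime}\right\vert^{2}\mathrm{d}s\le\left\Vert \psi\right\Vert_{0}^{2}$, which produces the first inequality with the stated constant.

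For the second inequality, on each $\left[r_{j},r_{j+1}\right]$ the function $\psi$ vanishes at \emph{both} endpoints, so I would derive a Poincar\'{e} estimate with the clean constant $\left(r_{j+1}-r_{j}\right)/2$ by writing $\psi\left(r\right)=\int_{r_{j}}^{r}\psi^{\prime}$ on the left half and $\psi\left(r\right)=-\int_{r}^{r_{j+1}}\psi^{\prime}$ on the right half, applying Cauchy--Schwarz, and integrating; the elementary identity $\int_{r_{j}}^{r_{j+1}}\min\left(r-r_{j},r_{j+1}-r\right)\mathrm{d}r=\tfrac14\left(r_{j+1}-r_{j}\right)^{2}$ supplies the factor $\tfrac14$ in the squared inequality, hence $\tfrac{r_{j+1}-r_{j}}{2}\le\tfrac{h}{2}$ in norm. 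Summing the squared subinterval inequalities and chaining with the first inequality gives $\left\Vert \psi\right\Vert_{L^{2}\left[r_{1},r_{n}\right]}\le\tfrac{h}{2}\left\Vert \psi^{\prime}\right\Vert_{L^{2}\left[r_{1},r_{n}\right]}\le\tfrac{h^{2}}{2\sqrt{r_{1}}}\left\Vert \psi\right\Vert_{0}$, and (\ref{eq:bound3}) closes both cases.

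There is no genuinely hard step here; the estimate is essentially the $L^{2}$ shadow of Theorem~\ref{thm:Linfty}. The only delicate points are the two constants: for $l=1$ one must route $\psi^{\prime\prime}$ through the weight $s$ of the inner product $\left\langle\cdot,\cdot\right\rangle_{0}$ (using $s\ge r_{1}$ on $\left[r_{1},r_{n}\right]$) so that the bound collapses onto $\left\Vert \psi\right\Vert_{0}$, and for $l=0$ one must use vanishing at \emph{both} ends of each subinterval, integrating from the nearer endpoint, to obtain the midpoint split that yields $\tfrac12$ rather than a larger constant.
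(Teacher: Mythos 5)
Your proposal is correct and follows essentially the same route as the paper: a midpoint-split Friedrichs inequality $\left\Vert \psi\right\Vert _{L^{2}}\leq\tfrac{h}{2}\left\Vert \psi^{\prime}\right\Vert _{L^{2}}$ on each subinterval, a Rolle-point estimate routing $\psi^{\prime\prime}$ through the weight $r\geq r_{1}$ to get $\left\Vert \psi^{\prime}\right\Vert _{L^{2}}\leq\tfrac{h}{\sqrt{r_{1}}}\left\Vert \psi\right\Vert _{0}$, and the first integral relation to close with $\left\Vert \psi\right\Vert _{0}\leq\left\Vert f\right\Vert _{0}$. The only differences are cosmetic (you keep the weight inside the Cauchy--Schwarz step where the paper first isolates $\left\Vert \psi^{\prime\prime}\right\Vert _{L^{2}}^{2}\leq r_{1}^{-1}\left\Vert f\right\Vert _{0}^{2}$, and you phrase the Poincar\'{e} step via the $\min$ kernel rather than two half-intervals), and all constants match.
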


\begin{proof}
We employ the notations from the proof of Theorem~\ref{thm:Linfty}, in
particular $\psi:=f-\sigma$. For $j\in\left\{  1,\ldots,n-1\right\}  $, we use
the following univariate version of the well-known Friedrichs inequality:%
\begin{equation}
\int_{r_{j}}^{r_{j+1}}\left\vert \psi\left(  r\right)  \right\vert
^{2}\mathrm{d}r\leq\frac{h^{2}}{4}\int_{r_{j}}^{r_{j+1}}\left\vert
\psi^{\prime}\left(  r\right)  \right\vert ^{2}\mathrm{d}r. \label{eq:Frid}%
\end{equation}
Indeed, if $m_{j}:=\frac{r_{j}+r_{j+1}}{2}$ and $r_{j}\leq r\leq m_{j}$, then
Leibniz-Newton formula, $\psi\left(  r_{j}\right)  =0$, and Cauchy-Schwarz
imply%
\[
\left\vert \psi\left(  r\right)  \right\vert ^{2}=\left\vert \int_{r_{j}}%
^{r}\psi^{\prime}\left(  u\right)  \mathrm{d}u\right\vert ^{2}\leq\left(
r-r_{j}\right)  \int_{r_{j}}^{r}\left\vert \psi^{\prime}\left(  u\right)
\right\vert ^{2}\mathrm{d}u\leq\frac{h}{2}\int_{r_{j}}^{m_{j}}\left\vert
\psi^{\prime}\left(  u\right)  \right\vert ^{2}\mathrm{d}u,
\]
hence, by integration,%
\[
\int_{r_{j}}^{m_{j}}\left\vert \psi\left(  r\right)  \right\vert
^{2}\mathrm{d}r\leq\frac{h^{2}}{4}\int_{r_{j}}^{m_{j}}\left\vert \psi^{\prime
}\left(  r\right)  \right\vert ^{2}\mathrm{d}r.
\]
Now (\ref{eq:Frid}) is obtained by adding this to a similar inequality that
holds on the interval $\left[  m_{j},r_{j+1}\right]  $. 

Summing both sides of
(\ref{eq:Frid}) over $j$ and taking square roots, we obtain%
\begin{equation}
\left\Vert \psi\right\Vert _{L^{2}\left[  r_{1},r_{n}\right]  }\leq\frac{h}%
{2}\left\Vert \psi^{\prime}\right\Vert _{L^{2}\left[  r_{1},r_{n}\right]  }.
\label{eq:bound5}%
\end{equation}
Next, note that $\psi^{\prime\prime}$ is square integrable over $\left[
r_{1},r_{n}\right]  $, due to%
\begin{align}
\left\Vert \psi^{\prime\prime}\right\Vert _{L^{2}\left[  r_{1},r_{n}\right]
}^{2}  &  =\int_{r_{1}}^{r_{n}}\frac{1}{r}\,r\,\left\vert  \psi^{\prime\prime
}\left(  r\right)  \right\vert  ^{2}\mathrm{d}r\nonumber\\
&  \leq\frac{1}{r_{1}}\int_{r_{1}}^{r_{n}}r\,\left\vert  \psi^{\prime\prime
}\left(  r\right)  \right\vert  ^{2}\mathrm{d}r\nonumber\\
&  \leq\frac{1}{r_{1}}\left\Vert \psi\right\Vert _{0}^{2}\leq\frac{1}{r_{1}%
}\left\Vert f\right\Vert _{0}^{2}. \label{eq:bound4}%
\end{align}
Hence, for $j\in\left\{  1,\ldots,n-1\right\}  $ and $r_{j}\leq r\leq r_{j+1}%
$, since $\psi^{\prime}$ is locally absolutely continuous and $\psi^{\prime
}\left(  t_{j}\right)  =0$, where $r_{j}<t_{j}<r_{j+1}$, we have
\[
\left\vert \psi^{\prime}\left(  r\right)  \right\vert ^{2}=\left\vert
\int_{t_{j}}^{r}\psi^{\prime\prime}\left(  u\right)  \mathrm{d}u\right\vert
^{2}\leq\left\vert r-t_{j}\right\vert \int_{t_{j}}^{r}\left\vert \psi
^{\prime\prime}\left(  u\right)  \right\vert ^{2}\mathrm{d}u\leq h\int_{r_{j}%
}^{r_{j+1}}\left\vert \psi^{\prime\prime}\left(  u\right)  \right\vert
^{2}\mathrm{d}u,
\]
which implies, via integration, the following analog of (\ref{eq:Frid}):%
\[
\int_{r_{j}}^{r_{j+1}}\left\vert  \psi^{\prime}\left(  r\right)  \right\vert
^{2}\mathrm{d}r\leq h^{2}\int_{r_{j}}^{r_{j+1}}\left\vert  \psi^{\prime\prime
}\left(  r\right)  \right\vert  ^{2}\mathrm{d}r.
\]
Summing once more over $j$ and using (\ref{eq:bound4}), we obtain%
\[
\left\Vert \psi^{\prime}\right\Vert _{L^{2}\left[  r_{1},r_{n}\right]  }%
^{2}\leq h^{2}\left\Vert \psi^{\prime\prime}\right\Vert _{L^{2}\left[
r_{1},r_{n}\right]  }^{2}\leq\frac{1}{r_{1}}h^{2}\left\Vert f\right\Vert
_{0}^{2},
\]
which establishes (\ref{eq:L2-error}) via (\ref{eq:bound5}).
\end{proof}

\begin{remark}
\emph{The result of Theorem~\ref{thm:L2} is used in \cite{L-spl}, along with
similar convergence results for the related class of }$L$\emph{-splines
studied there, to establish an }$L^{2}$\emph{-error bound for transfinite
surface interpolation with biharmonic Beppo Levi polysplines on annuli.}
\end{remark}

\begin{corollary}
\label{thm:Lp}Under the hypotheses of Theorem~\ref{thm:Linfty}, if
$p\in\left[  2,\infty\right]  $ and $l\in\left\{  0,1\right\}  $, then%
\begin{equation}
\left\Vert f^{\left(  l\right)  }-\sigma^{\left(  l\right)  }\right\Vert
_{L^{p}\left[  r_{1},r_{n}\right]  }\leq\frac{1}{2^{1-l}\sqrt{r_{1}}%
}h^{3/2+1/p-l}\left\Vert f\right\Vert _{0}. \label{eq:Lp-error}%
\end{equation}

\end{corollary}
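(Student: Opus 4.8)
The plan is to obtain the intermediate $L^{p}$ bounds by interpolating between the two endpoint estimates already in hand. Observe first that the claimed exponent $3/2+1/p-l$ reduces to $2-l$ when $p=2$ and to $3/2-l$ when $p=\infty$; hence the cases $p=2$ and $p=\infty$ are precisely Theorem~\ref{thm:L2} and Theorem~\ref{thm:Linfty}, respectively, and nothing further is required at the endpoints. It remains to treat $2<p<\infty$.

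For such $p$, I would apply the elementary log-convexity inequality for Lebesgue norms on the interval $\left[  r_{1},r_{n}\right]  $, namely
\[
\left\Vert g\right\Vert _{L^{p}\left[  r_{1},r_{n}\right]  }\leq\left\Vert g\right\Vert _{L^{2}\left[  r_{1},r_{n}\right]  }^{2/p}\left\Vert g\right\Vert _{L^{\infty}\left[  r_{1},r_{n}\right]  }^{1-2/p},
\]
which follows at once from the pointwise bound $\left\vert g\right\vert ^{p}=\left\vert g\right\vert ^{2}\left\vert g\right\vert ^{p-2}\leq\left\vert g\right\vert ^{2}\left\Vert g\right\Vert _{L^{\infty}}^{p-2}$, upon integrating and taking $p$-th roots. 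Here I would take $g:=f^{\left(  l\right)  }-\sigma^{\left(  l\right)  }$.

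Substituting the two estimates from Theorems~\ref{thm:Linfty} and~\ref{thm:L2}, both of which carry the identical prefactor $\frac{1}{2^{1-l}\sqrt{r_{1}}}\left\Vert f\right\Vert _{0}$, the weighted product of these constants—raised to the powers $2/p$ and $1-2/p$—returns that same prefactor, since these exponents sum to $1$. The exponent of $h$ then becomes $\left(  2-l\right)  \frac{2}{p}+\left(  3/2-l\right)  \left(  1-\frac{2}{p}\right)  $, which simplifies to $\left(  3/2-l\right)  +\frac{1}{p}=3/2+1/p-l$, exactly as claimed.

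There is no genuine obstacle here: the entire content lies in the two preceding theorems, and the present corollary is a one-line convexity interpolation. The only points deserving care are the bookkeeping of the constant—verifying that raising the common prefactor to the powers $2/p$ and $1-2/p$ and multiplying leaves it unchanged—and the arithmetic of the $h$-exponent, both of which I have indicated above.
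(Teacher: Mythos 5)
Your proof is correct and follows essentially the same route as the paper: both reduce the claim to the endpoint cases $p=2$ and $p=\infty$ (Theorems~\ref{thm:L2} and~\ref{thm:Linfty}) and then apply the log-convexity (Lyapunov) interpolation inequality $\left\Vert g\right\Vert _{L^{p}}\leq\left\Vert g\right\Vert _{L^{2}}^{2/p}\left\Vert g\right\Vert _{L^{\infty}}^{1-2/p}$, which the paper quotes from \cite[p. 175]{BS88} with $\theta=1-2/p$ while you derive it elementarily from the pointwise bound. The bookkeeping of the constant and the $h$-exponent in your write-up is accurate.
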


\begin{proof}
We employ a classical result on `interpolation between $L^{p}$-spaces' (see
\cite[p. 175]{BS88}): if $1\leq p_{0}<p<p_{1}\leq\infty$ and $\frac{1}%
{p}=\frac{1-\theta}{p_{0}}+\frac{\theta}{p_{1}}$ for some $\theta\in\left(
0,1\right)  $, then%
\[
\left\Vert \psi\right\Vert _{L^{p}}\leq\left\Vert \psi\right\Vert _{L^{p_{0}}%
}^{1-\theta}\left\Vert \psi\right\Vert _{L^{p_{1}}}^{\theta},\quad\forall
\psi\in L^{p_{0}}\cap L^{p_1}.
\]
Letting $p_{0}=2$, $p_{1}=\infty$, it follows that $2\left(  1-\theta\right)
+\frac{3}{2}\theta=\frac{3}{2}+\frac{1}{p}$, hence (\ref{eq:Lp-error}) is a
consequence of (\ref{eq:Linfty-err}) and (\ref{eq:L2-error}). Alternatively, a
direct proof of (\ref{eq:Lp-error}) for $p\in\left(  2,\infty\right)  $ can be
obtained via Jensen's inequality, as in \cite[Chapter 6]{LLS07}.
\end{proof}

\begin{remark}
\emph{For }$1\leq p<2$\emph{, the usual embedding of }$L^{2}\left[
r_{1},r_{n}\right]  $\emph{ into }$L^{p}\left[  r_{1},r_{n}\right]  $\emph{
shows that the }$L^{2}$\emph{-approximation order }$O\left(  h^{2-l}\right)
$\emph{ of (\ref{eq:L2-error}) also applies to the corresponding }$L^{p}%
$\emph{-norm of the error.}
\end{remark}

The final theorem of this section shows that the exponent $3/2$ appearing in
Theorem~\ref{thm:Linfty} is asymptotically sharp as $h\rightarrow0$, in the
sense that it cannot be increased for the class $\Lambda_{0}$ of data
functions. A similar result holds for the approximation order obtained in
Theorem~\ref{thm:L2}, but is omitted, for brevity. We will make use of the 
following lemma.

\begin{lemma}
\label{le:best-app}Let $\left(  X,\left\Vert \cdot\right\Vert \right)  $ be a
normed vector space of dimension at least $d+1$ and denote by $\mathcal{H}%
\subset X^{d}$ the set of all $d$-tuples $\left(  \mathbf{v}_{1}%
,\ldots,\mathbf{v}_{d}\right)  $ of $d$ linearly independent vectors in $X$.
For a given vector $\mathbf{x}\in X$ and any $\left(  \mathbf{v}_{1}%
,\ldots,\mathbf{v}_{d}\right)  \in\mathcal{H}$, let $\delta_{\mathbf{x}%
}\left(  \mathbf{v}_{1},\ldots,\mathbf{v}_{d}\right)  $ be the distance from
$\mathbf{x}$ to the sub-space of all linear combinations of $\mathbf{v}%
_{1},\ldots,\mathbf{v}_{d}$, i.e.%
\[
\delta_{\mathbf{x}}\left(  \mathbf{v}_{1},\ldots,\mathbf{v}_{d}\right)
=\inf_{\mathbf{v\in}\,\mathrm{span}\left\{  \mathbf{v}_{1},\ldots
,\mathbf{v}_{d}\right\}  }\left\Vert \mathbf{x-v}\right\Vert .
\]
Then $\delta_{\mathbf{x}}$ is a continuous function of $\left(  \mathbf{v}%
_{1},\ldots,\mathbf{v}_{d}\right)  $ on $\mathcal{H}$, where $\mathcal{H}$ is
endowed with the product topology induced from $X^{d}$.
\end{lemma}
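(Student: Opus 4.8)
The plan is to establish continuity at an arbitrary fixed tuple $\mathbf{V}=\left(\mathbf{v}_{1},\ldots,\mathbf{v}_{d}\right)\in\mathcal{H}$ by proving separately the two semicontinuity inequalities, namely $\limsup\delta_{\mathbf{x}}\le\delta_{\mathbf{x}}(\mathbf{V})$ and $\liminf\delta_{\mathbf{x}}\ge\delta_{\mathbf{x}}(\mathbf{V})$ as the tuple $\mathbf{W}=\left(\mathbf{w}_{1},\ldots,\mathbf{w}_{d}\right)$ tends to $\mathbf{V}$ in the product topology. In both directions the basic device is the elementary perturbation bound: if $\mathbf{u}=\sum_{i}c_{i}\mathbf{v}_{i}$ and $\mathbf{u}'=\sum_{i}c_{i}\mathbf{w}_{i}$ share the same coefficient vector, then $\left\Vert\mathbf{u}-\mathbf{u}'\right\Vert\le\left(\max_{i}\left\vert c_{i}\right\vert\right)\sum_{i}\left\Vert\mathbf{v}_{i}-\mathbf{w}_{i}\right\Vert$, so that replacing the system $\mathbf{V}$ by a nearby system $\mathbf{W}$ moves any prescribed linear combination by a controlled amount.

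Upper semicontinuity is the easy direction. Given $\varepsilon>0$, I would fix a near-optimal combination $\sum_{i}a_{i}\mathbf{v}_{i}$ with $\left\Vert\mathbf{x}-\sum_{i}a_{i}\mathbf{v}_{i}\right\Vert<\delta_{\mathbf{x}}(\mathbf{V})+\varepsilon$, noting that the coefficients $a_{i}$ are then frozen. Using $\sum_{i}a_{i}\mathbf{w}_{i}$ as a competitor for $\delta_{\mathbf{x}}(\mathbf{W})$ together with the basic bound gives $\delta_{\mathbf{x}}(\mathbf{W})\le\delta_{\mathbf{x}}(\mathbf{V})+\varepsilon+\left(\max_{i}\left\vert a_{i}\right\vert\right)\sum_{i}\left\Vert\mathbf{v}_{i}-\mathbf{w}_{i}\right\Vert$, and letting $\mathbf{W}\to\mathbf{V}$ and then $\varepsilon\to0$ yields the claim.

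The lower semicontinuity is the main obstacle, because a near-optimal combination $\sum_{i}b_{i}\mathbf{w}_{i}$ for $\delta_{\mathbf{x}}(\mathbf{W})$ now has coefficients $b_{i}$ depending on $\mathbf{W}$, which could a priori blow up as $\mathbf{W}$ varies. The key step I would isolate as an auxiliary claim is a \emph{uniform coefficient bound}: there exist a neighborhood $U$ of $\mathbf{V}$ in $X^{d}$ and a constant $m>0$ such that $\left\Vert\sum_{i}c_{i}\mathbf{w}_{i}\right\Vert\ge m\max_{i}\left\vert c_{i}\right\vert$ for every $\mathbf{W}\in U$ and all scalars $c_{1},\ldots,c_{d}$. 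To prove it I would invoke compactness of the scalar unit sphere $S=\{c:\max_{i}\left\vert c_{i}\right\vert=1\}$: the map $(c,\mathbf{W})\mapsto\left\Vert\sum_{i}c_{i}\mathbf{w}_{i}\right\Vert$ is continuous on $S\times X^{d}$ and, by linear independence of $\mathbf{v}_{1},\ldots,\mathbf{v}_{d}$, is strictly positive on the compact slice $S\times\{\mathbf{V}\}$, hence bounded below there by some $2m>0$; a routine compactness-continuity argument then produces $U$ on which the map stays $\ge m$, and homogeneity in $c$ delivers the stated inequality.

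With the claim in hand the argument closes quickly. Since the zero vector is always a competitor, $\delta_{\mathbf{x}}(\mathbf{W})\le\left\Vert\mathbf{x}\right\Vert$, so any near-minimizer satisfies $\left\Vert\sum_{i}b_{i}\mathbf{w}_{i}\right\Vert\le2\left\Vert\mathbf{x}\right\Vert+\varepsilon$, whence the uniform bound forces $\max_{i}\left\vert b_{i}\right\vert\le(2\left\Vert\mathbf{x}\right\Vert+\varepsilon)/m$ for all $\mathbf{W}\in U$. Feeding $\sum_{i}b_{i}\mathbf{v}_{i}$ as a competitor into $\delta_{\mathbf{x}}(\mathbf{V})$ and applying the perturbation bound once more yields $\delta_{\mathbf{x}}(\mathbf{V})\le\delta_{\mathbf{x}}(\mathbf{W})+\varepsilon+C\sum_{i}\left\Vert\mathbf{v}_{i}-\mathbf{w}_{i}\right\Vert$ with $C$ now independent of $\mathbf{W}$; letting $\mathbf{W}\to\mathbf{V}$ and $\varepsilon\to0$ completes the lower semicontinuity, and therefore the continuity of $\delta_{\mathbf{x}}$ at $\mathbf{V}$.
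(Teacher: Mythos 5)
Your proof is correct and follows essentially the same route as the paper's: one inequality is obtained by reusing the (near-)optimal coefficients on the perturbed system, and the other rests on the same key ingredient --- a uniform lower bound $\bigl\Vert\sum_{i}c_{i}\mathbf{w}_{i}\bigr\Vert\geq m\max_{i}\left\vert c_{i}\right\vert$ for all tuples near $\left(\mathbf{v}_{1},\ldots,\mathbf{v}_{d}\right)$, which the paper extracts from Kreyszig's finite-dimensional norm-equivalence lemma. The only difference is cosmetic: with the coefficients of near-minimizers uniformly bounded you transfer them back to the original system directly, whereas the paper argues by contradiction and extracts convergent subsequences of the coefficients before passing to the limit.
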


\begin{proof}
Let $\left(  \mathbf{v}_{1},\ldots,\mathbf{v}_{d}\right)  \in\mathcal{H}$,
$\delta:=\delta_{\mathbf{x}}\left(  \mathbf{v}_{1},\ldots,\mathbf{v}%
_{d}\right)  $, and, for each $j\in\left\{  1,\ldots,d\right\}  $, consider a
sequence $\left\{  \mathbf{v}_{j,n}\right\}  _{n=1}^{\infty}$ of vectors in
$X$, such that $\left\Vert \mathbf{v}_{j,n}-\mathbf{v}_{j}\right\Vert
\rightarrow0$ as $n\rightarrow\infty$, and $\left(  \mathbf{v}_{1,n}%
,\ldots,\mathbf{v}_{d,n}\right)  \in\mathcal{H}$, $\forall n\geq1$. We have to
show that, for every $\epsilon>0$, the following inequalities hold for all $n$
sufficiently large :%
\begin{equation}
\delta-\epsilon\leq\delta_{\mathbf{x}}\left(  \mathbf{v}_{1,n},\ldots
,\mathbf{v}_{d,n}\right)  \leq\delta+\epsilon. \label{eq:app1}%
\end{equation}

Letting $a_{1},\ldots,a_{d}$ be scalars for which $\left\Vert \mathbf{x}%
-\sum_{j=1}^{d}a_{j}\mathbf{v}_{j}\right\Vert =\delta$, and setting
$\mathbf{v}:=\sum_{j=1}^{d}a_{j}\mathbf{v}_{j,n}$, we have%
\[
\left\Vert \mathbf{x}-\mathbf{v}\right\Vert \leq\left\Vert \mathbf{x}%
-\sum_{j=1}^{d}a_{j}\mathbf{v}_{j}\right\Vert +\sum_{j=1}^{d}\left\vert
a_{j}\right\vert \left\Vert  \mathbf{v}_{j}-\mathbf{v}_{j,n} \right\Vert .
\]
Hence $\left\Vert \mathbf{x}-\mathbf{v}\right\Vert \leq\delta+\epsilon$ for
$n$ sufficiently large, which implies the right-side inequality of
(\ref{eq:app1}).

For the remaining inequality, we argue by contradiction and assume that there
exists $\epsilon>0$ such that, after selecting and re-indexing a sub-sequence,
we have $\delta_{\mathbf{x}}\left(  \mathbf{v}_{1,n},\ldots,\mathbf{v}%
_{d,n}\right)  <\delta-\epsilon$, for all $n\geq1$. Hence, there exist scalar
sequences $\left\{  b_{j,n}\right\}  _{n=1}^{\infty}$, $j\in\left\{
1,\ldots,d\right\}  $, such that, if $\mathbf{w}_{n}:=\sum_{j=1}^{d}%
b_{j,n}\mathbf{v}_{j,n}$, then%
\begin{equation}
\left\Vert \mathbf{x}-\mathbf{w}_{n}\right\Vert <\delta-\epsilon,\quad\forall
n\geq1. \label{eq:app2}%
\end{equation}
Next, we invoke a well-known result \cite[Lemma 2.4-1]{Krzg}, which guarantees
the existence of a constant $K>0$, such that%
\[
\left\Vert c_{1}\mathbf{v}_{1}+\ldots+c_{d}\mathbf{v}_{d}\right\Vert \geq K
\]
for all scalars $c_{1}\mathbf{,}\ldots,c_{d}$ with $\sum_{j=1}^{d}\left\vert
c_{j}\right\vert =1$. Also, let $n_{0}$ be a positive integer for which
$\left\Vert  \mathbf{v}_{j}-\mathbf{v}_{j,n}  \right\Vert
\leq\frac{K}{2}$, $\forall n\geq n_{0}$, $\forall j\in\left\{  1,\ldots
,d\right\}  $. Then, for all scalars $c_{1}\mathbf{,}\ldots,c_{d}$ as above
and all $n\geq n_{0}$, we have%
\[
\left\Vert \sum_{j=1}^{d}c_{j}\mathbf{v}_{j}\right\Vert -\left\Vert \sum
_{j=1}^{d}c_{j}\mathbf{v}_{j,n}\right\Vert \leq\sum_{j=1}^{d}\left\vert
c_{j}\right\vert \left\Vert  \mathbf{v}_{j}-\mathbf{v}_{j,n}
\right\Vert \leq\frac{K}{2},
\]
hence%
\[
\left\Vert \sum_{j=1}^{d}c_{j}\mathbf{v}_{j,n}\right\Vert \geq\left\Vert
\sum_{j=1}^{d}c_{j}\mathbf{v}_{j}\right\Vert -\frac{K}{2}\geq\frac{K}{2}.
\]
For all $n\geq n_{0}$, we obtain, via (\ref{eq:app2}),%
\[
\frac{K}{2}\sum_{j=1}^{d}\left\vert b_{j,n}\right\vert \leq\left\Vert
\sum_{j=1}^{d}b_{j,n}\mathbf{v}_{j,n}\right\Vert =\left\Vert \mathbf{w}%
_{n}\right\Vert \leq\left\Vert \mathbf{x}\right\Vert +\delta-\epsilon,
\]
and therefore each sequence $\left\{  b_{j,n}\right\}  _{n=1}^{\infty}$,
$j\in\left\{  1,\ldots,d\right\}  $, is bounded. Selecting convergent
sub-sequences, re-indexed such that $\lim_{n\rightarrow\infty}b_{j,n}=:b_{j}$,
and passing to the limit in (\ref{eq:app2}), we deduce the existence of a
vector $\mathbf{w}:=\sum_{j=1}^{d}b_{j}\mathbf{v}_{j}$, such that $\left\Vert
\mathbf{x}-\mathbf{w}\right\Vert \leq\delta-\epsilon$, which contradicts the
definition of $\delta$. The lemma is proved.
\end{proof}

\begin{theorem}
\label{thm:sharpness}For each $n\geq2$, let $h=1/\left(  n-1\right)  $ and let
$\rho_{h}$ be the set of uniformly spaced knots $r_{j}=1+\left(  j-1\right)
h$, $j\in\left\{  1,\ldots,n\right\}  $, hence $\left[  r_{1},r_{n}\right]
=\left[  1,2\right]  $. Then, for any $\varepsilon>0$ arbitrarily small, there
exists $f_{\varepsilon}\in\Lambda_{0}$ and a constant $M>0$, such that%
\[
M\,h^{3/2+\varepsilon}\leq\left\Vert f_{\varepsilon}-\sigma_{h}\right\Vert
_{L^{\infty}\left[  1,2\right]  }%
\]
holds for all sufficiently small $h>0$ and all $\sigma_{h}\in\mathcal{S}%
_{0}\left(  \rho_{h}\right)  $.
\end{theorem}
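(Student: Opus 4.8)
The plan is to exhibit a fixed function $f_{\varepsilon}$ whose best approximation from $\mathcal{S}_{0}\left(  \rho_{h}\right)$ cannot decay faster than $h^{3/2+\varepsilon}$, by localizing to the single subinterval $\left[  r_{1},r_{2}\right]  =\left[  1,1+h\right]$ and then combining a scaling argument with Lemma~\ref{le:best-app}. Concretely, I would set $f_{\varepsilon}\left(  r\right)  :=\chi\left(  r\right)  \left(  r-1\right)  _{+}^{3/2+\varepsilon}$, where $\left(  x\right)  _{+}:=\max\left(  x,0\right)$ and $\chi$ is a fixed smooth cutoff equal to $1$ on $\left[  1,3/2\right]$ and supported in $\left(  1/2,3\right)$. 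Near $r=1$ one has $f_{\varepsilon}\equiv0$ to the left and $f_{\varepsilon}\left(  r\right)  =\left(  r-1\right)  ^{3/2+\varepsilon}$ to the right, so $f_{\varepsilon}\in C^{1}$, $f_{\varepsilon}^{\prime}\in AC_{loc}$, and $f_{\varepsilon}^{\prime\prime}\left(  r\right)  =O\!\left(  \left(  r-1\right)  ^{-1/2+\varepsilon}\right)$ is square integrable near $1$; since $f_{\varepsilon}$ is smooth and compactly supported away from $r=1$, the energy (\ref{eq:semin0}) is finite, hence $f_{\varepsilon}\in\Lambda_{0}$. The first step is the localization: for any $\sigma_{h}\in\mathcal{S}_{0}\left(  \rho_{h}\right)$, condition (i) of Definition~\ref{def:natLspline} gives $L_{0}\sigma_{h}=0$ on $\left(  r_{1},r_{2}\right)$, so the restriction of $\sigma_{h}$ to $\left[  1,1+h\right]$ lies in $\mathrm{Ker}\,L_{0}$, whence
\begin{equation*}
\left\Vert f_{\varepsilon}-\sigma_{h}\right\Vert _{L^{\infty}\left[  1,2\right]  }\geq\left\Vert f_{\varepsilon}-\sigma_{h}\right\Vert _{L^{\infty}\left[  1,1+h\right]  }\geq\mathrm{dist}_{L^{\infty}\left[  1,1+h\right]  }\left(  f_{\varepsilon},\mathrm{Ker}\,L_{0}\right)  .
\end{equation*}

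Next I would rescale the localized interval. Substituting $r=1+ht$, $t\in\left[  0,1\right]$, and using that on $\left[  1,1+h\right]$ (for $h\leq1/2$) one has $f_{\varepsilon}\left(  1+ht\right)  =h^{3/2+\varepsilon}g\left(  t\right)$ with $g\left(  t\right)  :=t^{3/2+\varepsilon}$, the sup norm is invariant under the horizontal scaling, so
\begin{equation*}
\mathrm{dist}_{L^{\infty}\left[  1,1+h\right]  }\left(  f_{\varepsilon},\mathrm{Ker}\,L_{0}\right)  =h^{3/2+\varepsilon}\,\mathrm{dist}_{L^{\infty}\left[  0,1\right]  }\left(  g,V_{h}\right)  ,
\end{equation*}
where $V_{h}:=\left\{  \,t\mapsto v\left(  1+ht\right)  :v\in\mathrm{Ker}\,L_{0}\,\right\}$ is a four-dimensional subspace of $C\left[  0,1\right]$. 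The homogeneity in the first argument of the distance, valid because $V_{h}$ is a linear space, is exactly what frees the factor $h^{3/2+\varepsilon}$. It then remains to bound $\mathrm{dist}_{L^{\infty}\left[  0,1\right]  }\left(  g,V_{h}\right)$ from below by a positive constant for all small $h$.

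For this I would show that $V_{h}$ converges, in the Grassmannian sense, to the space of cubic polynomials. Let $y_{0},\ldots,y_{3}$ be the basis of $\mathrm{Ker}\,L_{0}$ determined by the initial conditions $y_{k}^{\left(  i\right)  }\left(  1\right)  =\delta_{ik}$, $0\leq i\leq3$; since the $y_{k}$ are smooth near $r=1$, Taylor's formula gives $\tfrac{k!}{h^{k}}\,y_{k}\left(  1+ht\right)  =t^{k}+O\left(  h\right)$ uniformly on $\left[  0,1\right]$, so the normalized tuple $\big(\tfrac{0!}{h^{0}}y_{0}\left(  1+h\cdot\right)  ,\ldots,\tfrac{3!}{h^{3}}y_{3}\left(  1+h\cdot\right)  \big)$ consists of linearly independent functions spanning $V_{h}$ and converges in $C\left[  0,1\right]$ to $\left(  1,t,t^{2},t^{3}\right)$. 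Applying Lemma~\ref{le:best-app} with $X=L^{\infty}\left[  0,1\right]$, $d=4$ and $\mathbf{x}=g$, and noting that $g\notin\mathrm{span}\left\{  1,t,t^{2},t^{3}\right\}$ because $g$ is not a polynomial, we obtain
\begin{equation*}
\mathrm{dist}_{L^{\infty}\left[  0,1\right]  }\left(  g,V_{h}\right)  \longrightarrow\mathrm{dist}_{L^{\infty}\left[  0,1\right]  }\big(g,\mathrm{span}\left\{  1,t,t^{2},t^{3}\right\}  \big)=:\delta_{0}>0
\end{equation*}
as $h\rightarrow0$. Thus $\mathrm{dist}_{L^{\infty}\left[  0,1\right]  }\left(  g,V_{h}\right)  \geq\delta_{0}/2$ for all sufficiently small $h$, and chaining the three displays yields $\left\Vert f_{\varepsilon}-\sigma_{h}\right\Vert _{L^{\infty}\left[  1,2\right]  }\geq M\,h^{3/2+\varepsilon}$ with $M:=\delta_{0}/2$, uniformly over $\sigma_{h}\in\mathcal{S}_{0}\left(  \rho_{h}\right)$. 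I expect the main obstacle to be precisely the Grassmannian convergence $V_{h}\rightarrow\mathrm{span}\left\{  1,t,t^{2},t^{3}\right\}$: the obvious basis $\left\{  1,\ln r,r^{2},r^{2}\ln r\right\}$ of $\mathrm{Ker}\,L_{0}$ degenerates under the scaling, so the crux is to pass to the initial-value basis and verify the uniform Taylor estimate, which is what makes Lemma~\ref{le:best-app} applicable with a nondegenerate limit space.
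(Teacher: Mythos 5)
Your argument is correct, and its skeleton coincides with the paper's: the same type of data function $f_{\varepsilon}(r)=g(r)\left\vert r-1\right\vert^{3/2+\varepsilon}$ (the one-sided power $(r-1)_{+}^{3/2+\varepsilon}$ works equally well and agrees with the paper's choice on $[1,1+h]$), the same localization to $[1,1+h]$ using condition (i) of Definition~\ref{def:natLspline}, the same change of variables $r=1+th$ exploiting homogeneity of the distance to the linear space $\mathrm{Ker}\,L_{0}$, and the same appeal to Lemma~\ref{le:best-app}. Where you genuinely diverge is in how the rescaled distance is bounded below uniformly in $h$. The paper proves that $\alpha(h)=\inf_{\eta\in\mathrm{Ker}\,L_{0}}\Vert t^{\mu}-\eta(1+th)\Vert_{L^{\infty}[0,1]}$ is continuous and positive on all of $[0,1]$ and takes its minimum $A(\mu)>0$ over the compact interval, feeding Lemma~\ref{le:best-app} the unnormalized tuple built from the basis (\ref{eq:ker}); this yields the bound for every $h\in(0,1]$, not only for small $h$. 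You instead renormalize to the initial-value basis $\bigl(\tfrac{k!}{h^{k}}y_{k}(1+h\cdot)\bigr)_{k=0}^{3}$, show via uniform Taylor expansion that it converges in $C[0,1]^{4}$ to the linearly independent tuple $(1,t,t^{2},t^{3})$, and conclude that the rescaled distance converges to $\mathrm{dist}_{L^{\infty}[0,1]}\bigl(t^{3/2+\varepsilon},\mathrm{span}\{1,t,t^{2},t^{3}\}\bigr)>0$. This buys two things: an explicit identification of the limiting constant as a classical best-approximation distance to cubics, and, more importantly, a clean treatment of the endpoint $h_{0}=0$ --- the paper's unnormalized tuple degenerates there (all four basis functions of (\ref{eq:ker}) collapse to constants at $h=0$, so the limit tuple is not in $\mathcal{H}$ and Lemma~\ref{le:best-app} does not literally apply at that point), which is exactly the difficulty your renormalization is designed to remove. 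The price is that your bound holds only for sufficiently small $h$, but that is all Theorem~\ref{thm:sharpness} asserts.
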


\begin{proof}
We adapt the arguments of \cite[Theorem~11]{SV67}, using the fact that the
null space $\mathrm{Ker}\,L_{0}$ described by (\ref{eq:ker}) is a
finite-dimensional vector space. Specifically, let $\mu>0$ and, for each
$h\in\left[  0,1\right]  $, define%
\[
\alpha\left(  h\right)  =\alpha\left(  h,\mu\right)  =\inf_{\eta
\in\mathrm{Ker}\,L_{0}}\left\Vert t^{\mu}-\eta\left(  1+th\right)  \right\Vert
_{L^{\infty}\left[  0,1\right]  }.
\]
We claim that $\alpha$ is a continuous function of $h$ on $\left[  0,1\right]
$. To see this, first observe that, for any $\eta\in\mathrm{Ker}\,L_{0}$ and
$h_{0}\in\left[  0,1\right]  $, we have%
\[
\left\Vert \eta\left(  1+\cdot h\right)  -\eta\left(  1+\cdot h_{0}\right)
\right\Vert _{L^{\infty}\left[  0,1\right]  }\rightarrow0,\text{ as
}h\rightarrow h_{0},
\]
which is a consequence of the uniform continuity of any such function $\eta$
on $\left[  1,2\right]  $. Then use this observation for each of the four
basis functions of (\ref{eq:ker}) and apply Lemma~\ref{le:best-app} with
$X:=C\left[  0,1\right]  $, $d:=4$, to deduce $\alpha\left(  h\right)
\rightarrow\alpha\left(  h_{0}\right)  $ as $h\rightarrow h_{0}$.

Also, note that $\alpha\left(  h\right)  >0$, $\forall h\in\left[  0,1\right]
$. Indeed, this inequality is easily verified if $h=0$, while, if $h\in(0,1]$,
it follows from the fact that $\left(  r-1\right)  ^{\mu}\not \in
\mathrm{Ker}\,L_{0}$, for $r\in\left[  1,1+h\right]  $. Therefore, letting
$\min\limits_{0\leq h\leq1}\alpha\left(  h,\mu\right)  =:A\left(  \mu\right)
$, we have $A\left(  \mu\right)  >0$ and%
\begin{align}
\inf_{\eta\in\mathrm{Ker}\,L_{0}}\left\Vert \left(  th\right)  ^{\mu}%
-\eta\left(  1+th\right)  \right\Vert _{L^{\infty}\left[  0,1\right]  }  &
=h^{\mu}\inf_{\eta\in\mathrm{Ker}\,L_{0}}\left\Vert t^{\mu}-h^{-\mu}%
\eta\left(  1+th\right)  
\right\Vert _{L^{\infty}\left[  0,1\right]}\nonumber\\
&  =h^{\mu}\inf_{\zeta\in\mathrm{Ker}\,L_{0}}
\left\Vert t^{\mu}-\zeta\left(1+th\right)  
\right\Vert _{L^{\infty}\left[  0,1\right]  }\nonumber\\
&  \geq h^{\mu}A\left(  \mu\right)  .\label{eq:cheb}
\end{align}

Next, let $g$ be a $C^{\infty}$-smooth function with compact support within
$\left(  0,\infty\right)  $, such that $g\left(  r\right)  =1$, $\forall
r\in\left[  1,2\right]  $, and, for $\varepsilon>0$, define%
\begin{equation}
f_{\varepsilon}\left(  r\right)  :=g\left(  r\right)  \left\vert
r-1\right\vert ^{\frac{3}{2}+\varepsilon},\quad r\in\left(  0,\infty\right)  .
\label{eq:f-eps}%
\end{equation}
It follows that $f_{\varepsilon}\in\Lambda_{0}$ for each
$\varepsilon>0$. Making the change of variables $r-1=th$, $t\in\left[
0,1\right]  $, and using (\ref{eq:cheb}) with $\mu:=\tfrac{3}{2}+\varepsilon$,
we obtain, for any $\sigma_{h}\in\mathcal{S}_{0}\left(  \rho_{h}\right)  $,%
\begin{align*}
\left\Vert f_{\varepsilon}-\sigma_{h}\right\Vert _{L^{\infty}\left[
1,2\right]  }  &  \geq\left\Vert f_{\varepsilon}-\sigma_{h}\right\Vert
_{L^{\infty}\left[  1,1+h\right]  }\geq\inf_{\eta\in\mathrm{Ker}\,L_{0}%
}\left\Vert f_{\varepsilon}-\eta\right\Vert _{L^{\infty}\left[  1,1+h\right]
}\\
&  =\inf_{\eta\in\mathrm{Ker}\,L_{0}}\left\Vert \left(  r-1\right)  ^{\frac
{3}{2}+\varepsilon}-\eta\left(  r\right)  \right\Vert _{L^{\infty}\left[
1,1+h\right]  }\\
&  =\inf_{\eta\in\mathrm{Ker}\,L_{0}}\left\Vert \left(  th\right)  ^{\frac
{3}{2}+\varepsilon}-\eta\left(  1+th\right)  \right\Vert _{L^{\infty}\left[
0,1\right]  }\geq h^{\frac{3}{2}+\varepsilon}A\left(  \tfrac{3}{2}%
+\varepsilon\right)  ,
\end{align*}
hence the conclusion of the theorem holds with $M:=A\left(  \tfrac{3}%
{2}+\varepsilon\right)  $.
\end{proof}

\section{Numerical results and examples}

\label{sec5}

\subsection{Numerical accuracy of $L_{0}$-spline interpolation}

For a data profile $f\in\Lambda_{0}$, we denote by $\sigma^{A}$ and
$\sigma^{B}$ the Beppo Levi $L_{0}$-spline interpolants described in
Theorem~\ref{thm:Linfty}. To test the numerical accuracy of these two
profiles, we partition the interval $\left[  1,2\right]  $ into $n-1$ equal
subintervals of mesh-size $h:=\frac{1}{n-1}$ by letting $r_{j}:=1+h\left(
j-1\right)  $, $j\in\left\{  1,\ldots,n\right\}  $, hence $r_{1}=1$ and
$r_{n}=2$.

Given $n$ and $f$, we use Matlab to compute the coefficients of representation
(\ref{eq:rbf-k0}) for $\sigma^{A}$ and $\sigma^{B}$. Specifically, if
$\sigma:=\sigma^{A}$, we let $c:=f\left(  0\right)  $ and find $\left\{
a_{k}\right\}  _{k=1}^{n}$ from the system%
\[
\sum\limits_{k=1}^{n}a_{k}\varphi_{0}\left(  \frac{r_{j}}{r_{k}}\right)
=f\left(  r_{j}\right)  -f\left(  0\right)  ,\quad\forall j\in\left\{
1,\ldots,n\right\}  ,
\]
while the coefficients of $\sigma:=\sigma^{B}$ are determined from the
corresponding system%
\begin{align*}
c+\sum\limits_{k=1}^{n}a_{k}\varphi_{0}\left(  \frac{r_{j}}{r_{k}}\right)   &
=f\left(  r_{j}\right)  ,\quad\forall j\in\left\{  1,\ldots,n\right\}  ,\\
\sum_{k=1}^{n}a_{k}r_{k}^{-2}  &  =0.
\end{align*}
Next, we use expression (\ref{eq:rbf-k0}) to evaluate $\sigma^{A,B}$
numerically on a ten-times finer grid, \emph{i.e.} at 9 equi-spaced points of
each subinterval $\left[  r_{j},r_{j+1}\right]  $, $j\in\left\{
1,\ldots,n-1\right\}  $. Our numerical approximation of the uniform error
$\left\Vert f-\sigma^{A,B}\right\Vert _{L^{\infty}\left[  1,2\right]  }$ is
then computed as%
\[
E_{h}^{A,B}:=\max\left\{  \left\vert \left(  f-\sigma^{A,B}\right)  \left(
1+h\left(  j-1\right)  +\frac{hl}{10}\right)  \right\vert :j=1,\ldots
,n-1;\ l=1,\ldots,9\right\}  .
\]
This procedure is employed successively for $n-1:=2^{m}$, $m\in\left\{
4,5,\ldots,10\right\}  $. Since the values of $E_{h}^{A,B}$ are expected to
decay proportionally with $h^{\kappa}$, for a positive constant $\kappa$, we
also compute the approximations%
\[
\kappa_{h}^{A,B}:=\log_{2}\left(  E_{h}^{A,B}/E_{h/2}^{A,B}\right)
\approx\kappa.
\]
Tables 1--3 display the numerical results for three data profiles $f$.%

\[%
\begin{array}
[c]{c}%
\begin{array}
[t]{ccccccc}%
\hline
n-1 &  & E_{h}^{A} & \kappa_{h}^{A} &  & E_{h}^{B} & \kappa_{h}^{B}\\
\hline
16 &  & 9.8230\times10^{-4} & 1.5369 &  & 1.0307\times10^{-3} & 1.5709\\
32 &  & 3.3851\times10^{-4} & 1.5670 &  & 3.4691\times10^{-4} & 1.5839\\
64 &  & 1.1425\times10^{-4} & 1.5827 &  & 1.1572\times10^{-4} & 1.5919\\
128 &  & 3.8144\times10^{-5} & 1.5913 &  & 3.8390\times10^{-5} & 1.5959\\
256 &  & 1.2659\times10^{-5} & 1.5956 &  & 1.2700\times10^{-5} & 1.5979\\
512 &  & 4.1886\times10^{-6} & 1.5978 &  & 4.1954\times10^{-6} & 1.5990\\
1024 &  & 1.3838\times10^{-6} & - &  & 1.3849\times10^{-6} & -\\
\hline
&  &  &  &  &  &
\end{array}
\\
\text{Table 1. }f\left(  r\right)  :=f_{\varepsilon}\left(  r\right)  \text{
for }\varepsilon=0.1
\end{array}
\]

\[%
\begin{array}
[c]{c}%
\begin{array}
[t]{ccccccc}%
\hline
n-1 &  & E_{h}^{A} & \kappa_{h}^{A} &  & E_{h}^{B} & \kappa_{h}^{B}\\
\hline
16 &  & 1.7625\times10^{-4} & 1.9414 &  & 1.8257\times10^{-4} & 1.9665\\
32 &  & 4.5890\times10^{-5} & 1.9701 &  & 4.6715\times10^{-5} & 1.9829\\
64 &  & 1.1713\times10^{-5} & 1.9849 &  & 1.1818\times10^{-5} & 1.9913\\
128 &  & 2.9590\times10^{-6} & 1.9924 &  & 2.9724\times10^{-6} & 1.9956\\
256 &  & 7.4366\times10^{-7} & 1.9962 &  & 7.4534\times10^{-7} & 1.9978\\
512 &  & 1.8641\times10^{-7} & 1.9981 &  & 1.8662\times10^{-7} & 1.9989\\
1024 &  & 4.6663\times10^{-8} & - &  & 4.6690\times10^{-8} & -\\
\hline
&  &  &  &  &  &
\end{array}
\\
\text{Table 2. }f\left(  r\right)  =r
\end{array}
\]

\[%
\begin{array}
[c]{c}%
\begin{array}
[t]{ccccccc}%
\hline
n-1 &  & E_{h}^{A} & \kappa_{h}^{A} &  & E_{h}^{B} & \kappa_{h}^{B}\\
\hline
16 &  & 1.5906\times10^{-3} & 2.0093 &  & 1.7113\times10^{-3} & 1.9712\\
32 &  & 3.9510\times10^{-4} & 2.0035 &  & 4.3646\times10^{-4} & 1.9840\\
64 &  & 9.8536\times10^{-5} & 2.0014 &  & 1.1033\times10^{-4} & 1.9916\\
128 &  & 2.4609\times10^{-5} & 2.0006 &  & 2.7744\times10^{-5} & 1.9957\\
256 &  & 6.1496\times10^{-6} & 2.0003 &  & 6.9566\times10^{-6} & 1.9978\\
512 &  & 1.5371\times10^{-6} & 2.0002 &  & 1.7418\times10^{-6} & 1.9989\\
1024 &  & 3.8422\times10^{-7} & - &  & 4.3577\times10^{-7} & -\\
\hline
&  &  &  &  &  &
\end{array}
\\
\text{Table 3. }f\left(  r\right)  =\cos3r
\end{array}
\]

Table 1 refers to the data function $f_{\varepsilon}$ defined by
(\ref{eq:f-eps}) for $\varepsilon=0.1$. In accord with
Theorems~\ref{thm:Linfty} and \ref{thm:sharpness}, the computed values
$\kappa_{h}^{A,B}$ indicate a clear tendency to converge numerically to the
approximation order $\kappa=1.6$. It can be verified that similar numerical
results with $\kappa=\frac{3}{2}+\varepsilon$ hold for other positive values
of $\varepsilon$, showing a direct link between the smoothness/singularity of
the data function and the approximation order $\kappa.$

On the other hand, Tables 2 \& 3 refer to $C^{\infty}$ data
functions\footnote{It can be assumed that these data profiles belong to
$\Lambda_{0}$ after suitable multiplication by a smooth mollifier which takes
the constant value $1$ on $\left[  0,2\right]  $, and which vanishes
identically on an interval $[a,\infty)$ with $a>2$.}. Along with similar
results that can be observed for other smooth data functions, the two tables
suggest the conjecture that, for Beppo Levi $L_{0}$-spline interpolation to
$C^{\infty}$-smooth data profiles $f$, the uniform norm of the error over the
interval $\left[  r_{1},r_{n}\right]  $ decays with (saturation) order
$O\left(  h^{2}\right)  $, as $h\rightarrow0$. The resolution of this
conjecture is likely to require different techniques than those employed to
establish Theorem~\ref{thm:Linfty} and remains a topic for further research.
Note that the $L^{\infty}$-approximation order $2$ was also conjectured by
Johnson \cite{Jo04} for the related problem of thin plate spline interpolation
to scattered data in any planar domain with a smooth boundary.

\subsection{Two compactly supported $L_{0}$-splines}

A remarkable example of a Beppo Levi $L_{0}$-spline has recently been obtained
independently by Johnson \cite{Jo12} in the context of radial basis function
(RBF) methods for multivariable scattered data interpolation. Specifically,
the profile denoted by Johnson as $\eta_{2}$, constructed as part of a family of 
compactly supported and piecewise polyharmonic RBFs, can be expressed as%
\[
\eta_{2}\left(  r\right)  =\frac{4}{3}\left[  \ln2-\varphi_{0}\left(
r\right)  +\varphi_{0}\left(  \frac{r}{2}\right)  \right]  ,\quad r\geq0,
\]
where $\varphi_{0}$ is defined by (\ref{eq:rbf-ann0}). Letting $\rho=\left\{
1,2\right\}  $, it follows that $\eta_{2}\in\mathcal{S}_{0}\left(
\rho\right)  $ and $\eta_{2}$ is supported on the interval $\left[
0,2\right]  $. As stipulated in \cite[Definition~3.9]{Jo12}, the `singular
coefficient' of $\eta_{2}$, \emph{i.e.} the coefficient of $r^{2}\ln r$ in the
linear representation of $\eta_{2}$ as a member of $\mathrm{Ker}\,L_{0}$ over
the interval $\left(  0,1\right)  $, equals $1$.

Note that $k=2$ is the smallest integer $k$ for which there exists a Beppo
Levi $L_{0}$-spline profile in $\mathcal{S}_{0}\left(  \rho\right)  $ with
compact support on $\left[  0,k\right]  $, where $\rho=\left\{  1,2,\ldots
,k\right\}  $. Alternatively, if the same existence problem is considered for
a \emph{non-singular} Beppo Levi $L_{0}$-spline (\emph{i.e.}, having a zero
singular coefficient), then $k=3$ is the smallest integer for which this
problem admits a solution. More precisely, for $\rho=\left\{  1,2,3\right\}
$, there exists, up to a constant factor, a unique $\beta\in\mathcal{S}%
_{0}\left(  \rho\right)  $, which is non-singular and compactly supported on
$\left[  0,3\right]  $. It is given by the formula%
\[
\beta\left(  r\right)  =\frac{1}{5}\left[  27\ln3-32\ln2+5\varphi_{0}\left(
r\right)  -32\varphi_{0}\left(  \frac{r}{2}\right)  +27\varphi_{0}\left(
\frac{r}{3}\right)  \right]  ,\quad r\geq0.
\]

\begin{figure}
\includegraphics[height=1.5in,width=2.0in]{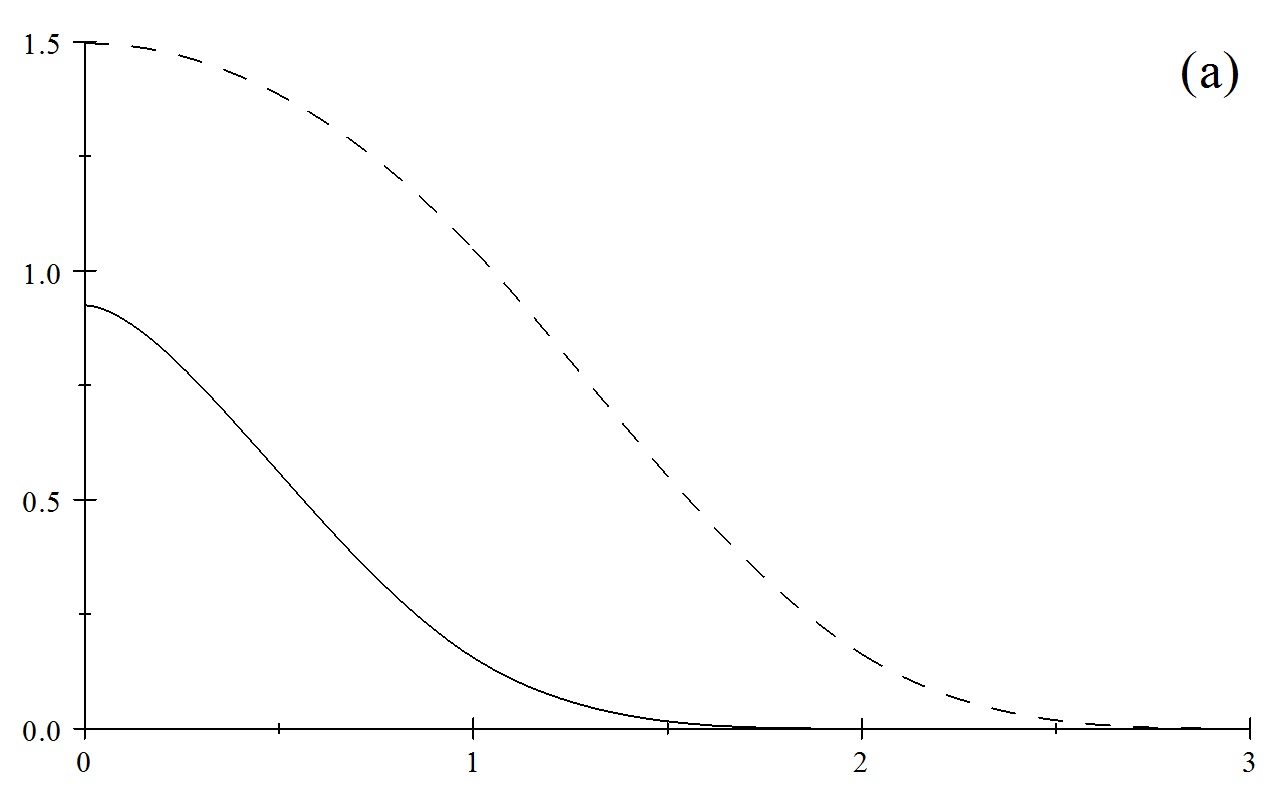}
\hspace{0.5in}
\includegraphics[height=1.5in,width=2.0in]{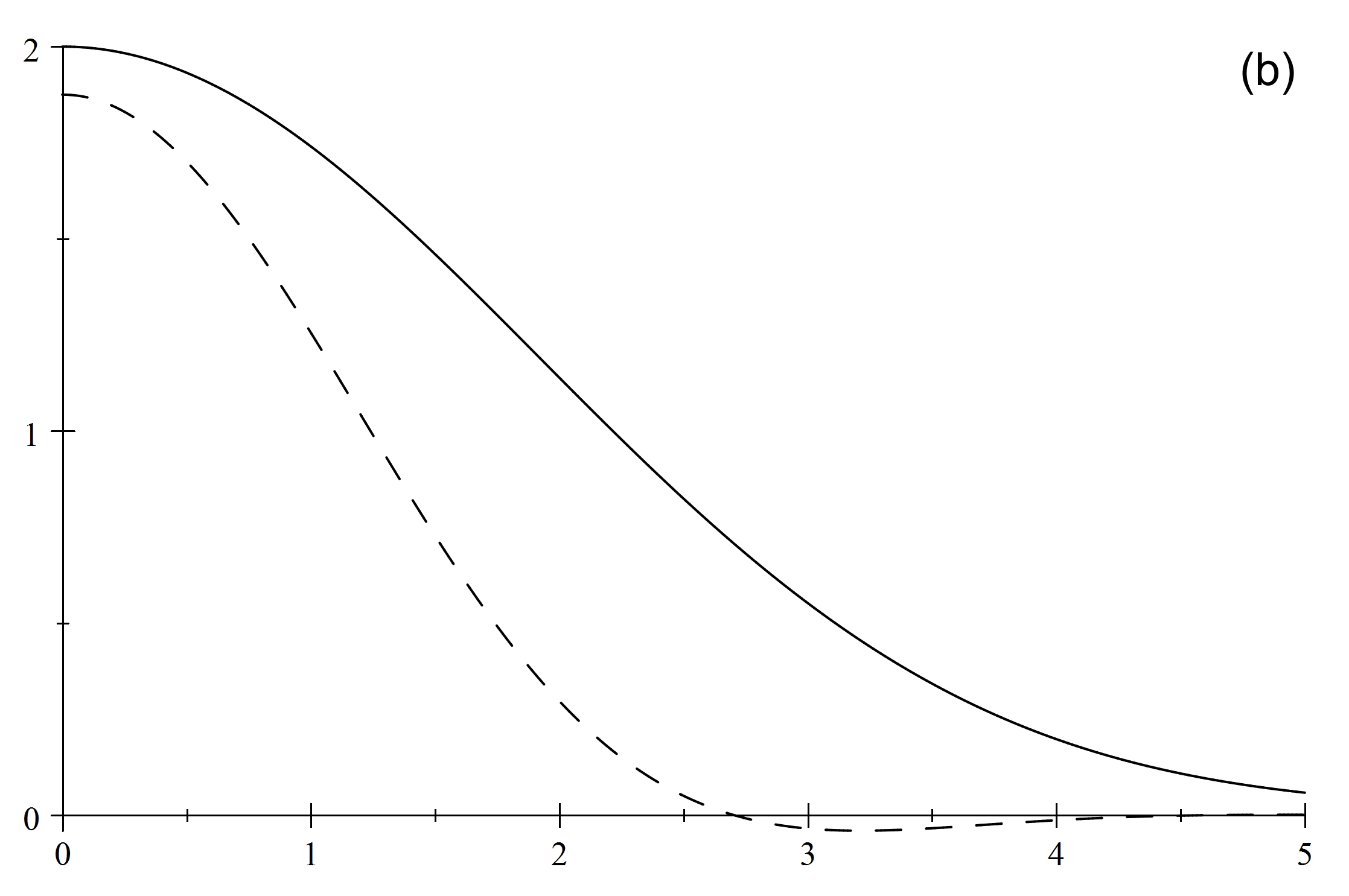}
\caption{{\bf (a)} Plots of $\eta_{2}$ (solid) and $\beta$ (dash). {\bf (b)} Scaled 
plots of $F_2 \eta_{2}$ (solid) and $F_2 \beta$ (dash).}
\label{fig:compact}
\end{figure}

Although not revealed by their plots in Figure~\ref{fig:compact}(a), there is a fundamental
property that recommends $\eta_{2}$ against $\beta$ for use in RBF
interpolation applications. Namely, if $F_{2}\eta_{2}$ denotes the profile of
the Fourier transform of the bivariate radially symmetric extension of
$\eta_{2}$, it is proved in \cite{Jo12} that $F_{2}\eta_{2}$ satisfies a
\emph{Sobolev regularity} condition at $\infty$. In particular, this implies
$\left(  F_{2}\eta_{2}\right)  \left(  t\right)  >0$ for $t\geq0$, hence
$\eta_{2}$ is \emph{positive definite} on $\mathbb{R}^{2}$. In contrast,
using \cite{Jo12} to express $F_{2} \beta$
in terms of the Bessel coefficient $J_{0}$, it is
verified numerically that $F_{2}\beta$
takes both positive and negative values, as can be observed in 
Figure~\ref{fig:compact}(b).

The reader is referred to the monograph \cite{W05} for the
role of positive definiteness and Sobolev regularity in RBF interpolation.
The dilated profile $\eta_2(2\cdot)$ also appears in the recent paper by 
Ward and Unser \cite[Example 2.4]{WU14} as a so-called perturbation of a 
Sobolev spline.

\subsection{Examples of interpolatory $L_0$-splines}

In this subsection, we illustrate some examples of Beppo Levi $L_0$-splines 
that interpolate specific numerical data. Namely, we choose the cases of two 
knots $\rho = \{1,2\}$ in Figure~\ref{fig:twok} and three knots 
$\rho = \{1,2,5\}$ in Figure~\ref{fig:threek}, with corresponding 
interpolation values $\nu_1 = 1$, $\nu_2 = 0$, $\nu_3 = 0.5$. Note that 
this data is also used in Rabut's numerical examples \cite{Rab96}, except 
that in those examples $\nu_3$ has a value that is close to, but different 
from $0.5$.

\begin{figure}
\centering
\includegraphics[height=2.0in,width=3.0in]{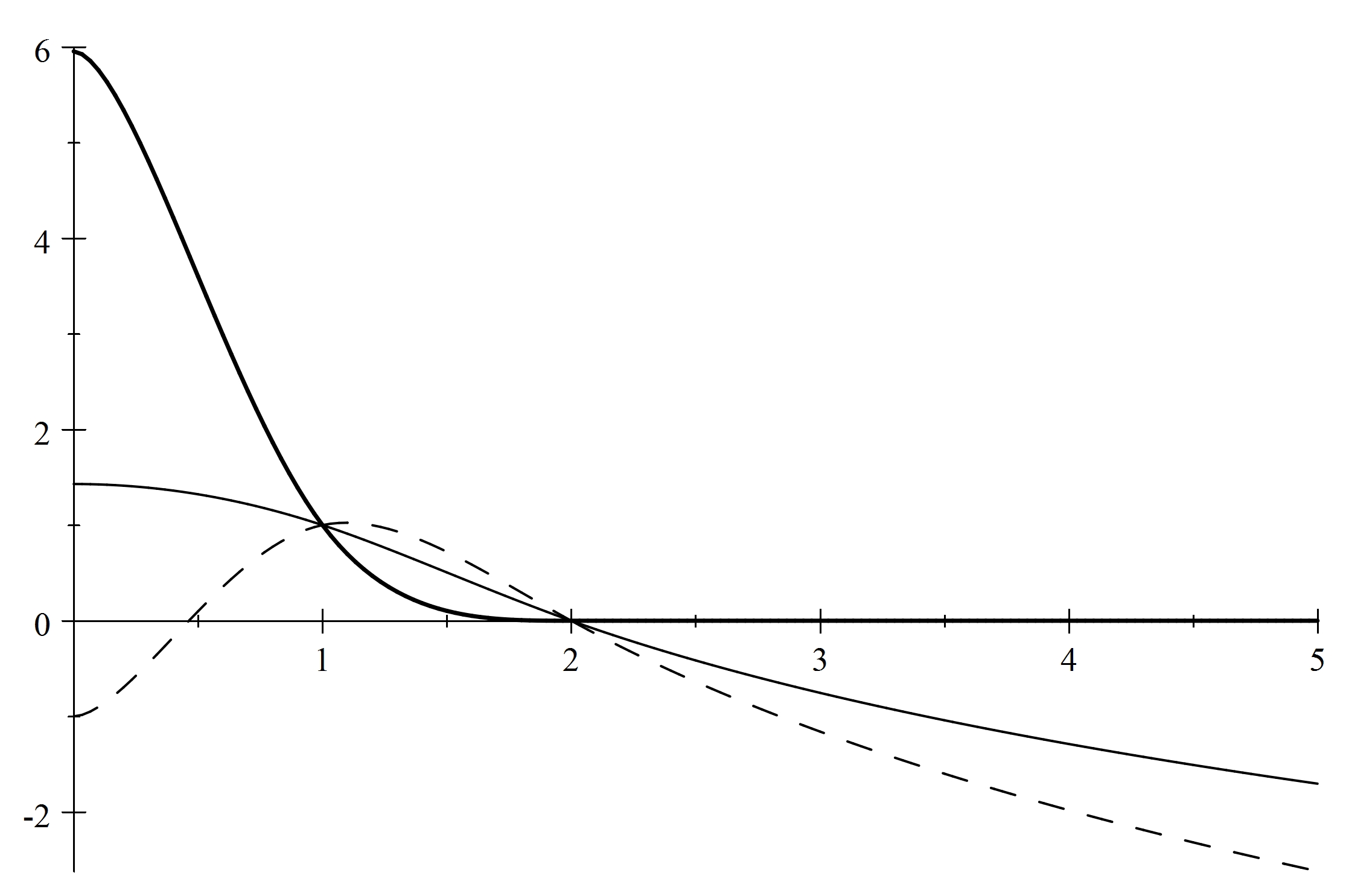}
\caption{Two knots $r_1=1$, $r_2=2$, and interpolation values $\nu_1 = 1$, 
$\nu_2 = 0$. Thick curve: $\sigma^{A}$ for 
$\alpha = \frac {4\ln 2} {5\ln 2 - 3} \approx 5.95$. Thin curve: $\sigma^{B}$. 
Dash curve: $\sigma^{A}$ for $\alpha = -1$.}
\label{fig:twok}
\end{figure}

\begin{figure}
\centering
\includegraphics[height=2.0in,width=3.0in]{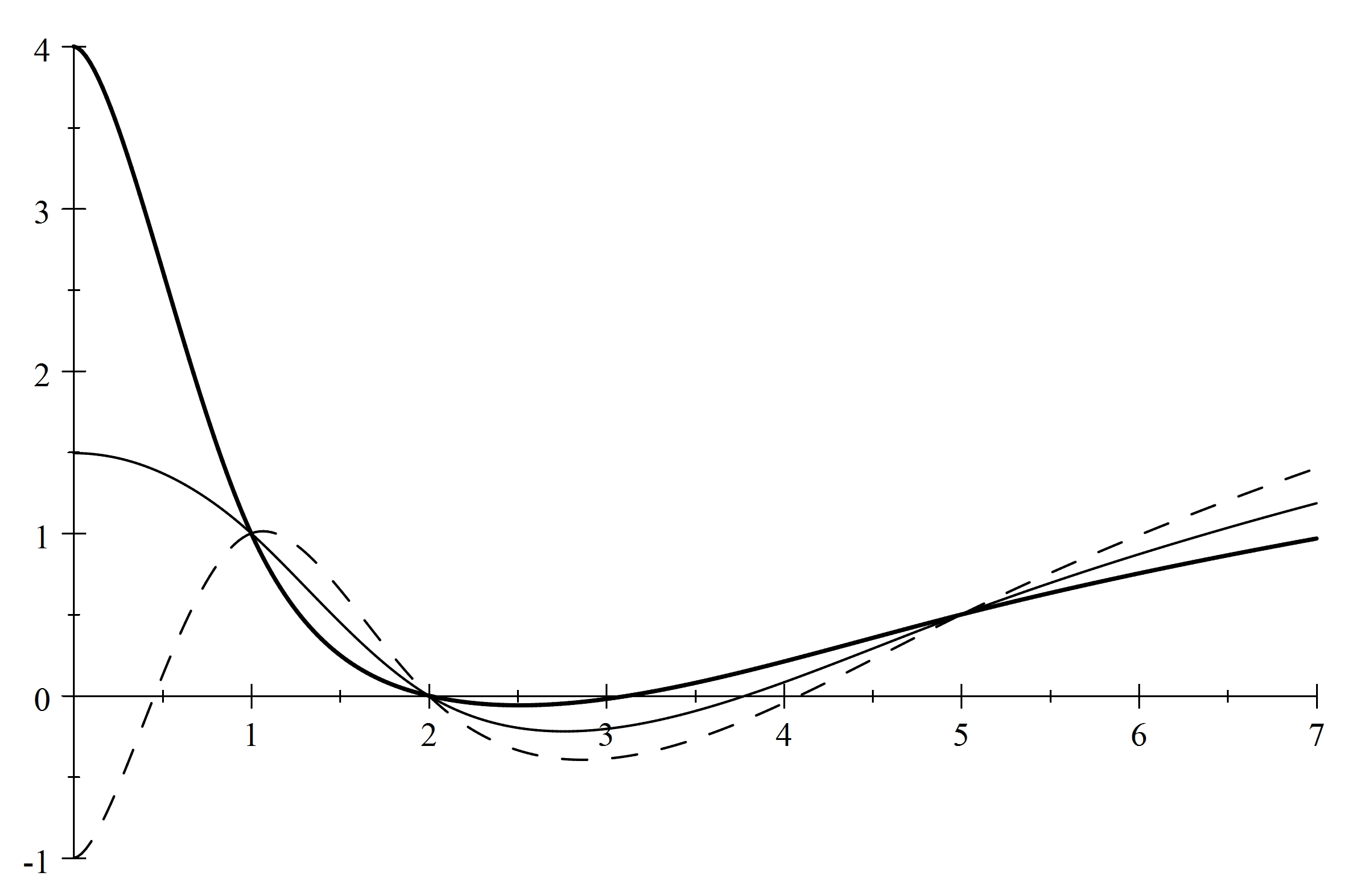}
\caption{Three knots $r_1=1$, $r_2=2$, $r_3=5$, and interpolation values 
$\nu_1 = 1$, $\nu_2 = 0$, $\nu_3=0.5$. Thick curve: $\sigma^{A}$ for 
$\alpha = 4$. Thin curve: $\sigma^{B}$. Dash curve: $\sigma^{A}$ for 
$\alpha = -1$.}
\label{fig:threek}
\end{figure}

Using formula (\ref{eq:rbf-k0}), our Beppo Levi $L_0$-spline interpolants  
are expressed as
\begin{equation}
\sigma\left(  r\right)  = c + a_{1}\varphi_{0}\left( r \right)
+ a_{2}\varphi_{0}\left(\frac{r}{2}\right)
+ a_{3}\varphi_{0}\left(\frac{r}{5}\right), \quad r\geq 0,
\end{equation}
where $a_3 = 0$ in the case of only two knots. In each of the two figures, 
the thin line plot represents the non-singular interpolant $\sigma^{B}$ of 
Theorem~\ref{thm:EU}, whose coefficients satisfy the additional relation 
(\ref{eq:side-B}), \emph{i.e.}\ $a_1 + a_2/4 + a_3/25 = 0$, besides the 
interpolation conditions. The thick and dash line plots correspond to interpolants 
of the type of $\sigma^{A}$ in Theorem~\ref{thm:EU}, for which 
$\sigma^{A}(0) = \alpha$, a prescribed value.

Note that, in the case of two knots considered in Figure~\ref{fig:twok}, 
there exists $\alpha$ for which $\sigma^{A}$ becomes a constant multiple 
of Johnson's profile $\eta_2$ described in the previous subsection. This 
special value $\alpha = \frac {4\ln 2} {5\ln 2 - 3} \approx 5.95$ is used to 
generate the thick curve of Figure~\ref{fig:twok}.

We remark that the thin line graphs of 
Figures~\ref{fig:twok} and \ref{fig:threek} correspond to Rabut's limit case 
in which $R_1 = 0$ and $R_2 = \infty$. In this case, Rabut's observations 
\cite[pp.\ 250--251]{Rab96} on the plot behaviour at $0$ and at $\infty$ 
can be explained, via our Proposition~\ref{prop}, by the form of the 
non-singular Beppo Levi $L_0$-spline $\sigma^{B}$ on the extreme 
subintervals.

\section{Conclusion}

We employed a $L$-spline approach to the problem of minimizing the radial
version (\ref{eq:semin0}) of the Beppo Levi energy integral over the full
semi-axis $\left(  0,\infty\right)  $, subject to interpolation conditions.
This treatment led us to the identification of singular/non-singular solution
profiles, which were expressed as linear combinations of dilates of a basis
function. For $p\geq2$ and data functions from the radial energy space, our
analysis proved the exact $L^{p}$-approximation power $3/2+1/p$ of the
$L$-spline profiles. This result is further used in the sequel paper
\cite{L-spl} to derive a $L^{2}$-error bound for surface interpolation through
curves prescribed on concentric circles.

Additional research is needed to establish the improved approximation order
observed in our numerical experiments for $C^{\infty}$ data functions. Also,
the close connections of Beppo Levi $L$-splines with RBF and polyspline
surfaces motivate the future extension of this work to higher order radially
symmetric piecewise polyharmonic surfaces.
\vspace{5pt} 

\noindent
{\bf Acknowledgements.} 
The author is grateful for the referees' feedback, which has led to an improved 
presentation in the revised version.

\end{document}